\numberwithin{equation}{section}
\numberwithin{figure}{section}
\theoremstyle{plain}
\newtheorem{thm}{\protect\theoremname}[section]
\theoremstyle{plain}
\newtheorem{lem}[thm]{\protect\lemmaname}
\theoremstyle{plain}
\newtheorem{prop}[thm]{\protect\propositionname}
\theoremstyle{plain}
\newtheorem{cor}[thm]{\protect\corollaryname}
\theoremstyle{remark}
\newtheorem*{rem*}{\protect\remarkname}
\date{\today}
\providecommand{\corollaryname}{Corollary}
\providecommand{\lemmaname}{Lemma}
\providecommand{\propositionname}{Proposition}
\providecommand{\remarkname}{Remark}
\providecommand{\theoremname}{Theorem}
\begin{document}
\title{Intermediate-scale statistics for real-valued lacunary sequences}
\author{Nadav Yesha}
\address{Department of Mathematics, University of Haifa, 3498838 Haifa, Israel.}
\email{nyesha@univ.haifa.ac.il}
\thanks{We thank Jens Marklof and Zeév Rudnick for stimulating discussions
and for their comments. This research was supported by the ISRAEL
SCIENCE FOUNDATION (Grant No. 1881/20).}
\begin{abstract}
We study intermediate-scale statistics for the fractional parts of
the sequence $\left(\alpha a_{n}\right)_{n=1}^{\infty}$, where $\left(a_{n}\right)_{n=1}^{\infty}$
is a positive, real-valued lacunary sequence, and $\alpha\in\mathbb{R}$.
In particular, we consider the number of elements $S_{N}\left(L,\alpha\right)$
in a random interval of length $L/N$, where $L=O\left(N^{1-\epsilon}\right)$,
and show that its variance (the number variance) is asymptotic
to $L$ with high probability w.r.t. $\alpha$, which is in agreement
with the statistics of uniform i.i.d. random points in the unit interval.
In addition, we show that the same asymptotics holds almost surely
in $\alpha\in\mathbb{R}$ when $L=O\left(N^{1/2-\epsilon}\right)$.
For slowly growing $L$, we further prove a central limit theorem
for $S_{N}\left(L,\alpha\right)$ which holds for almost all $\alpha\in\mathbb{R}$.
\end{abstract}

\maketitle

\section{Introduction}

A real-valued sequence $\left(x_{n}\right)_{n=1}^{\infty}$ is said
to be uniformly distributed (or equidistributed) modulo one, if for
every interval $I\subseteq[0,1)$, we have
\[
\lim_{N\to\infty}\frac{1}{N}\#\left\{ 1\le n\le N:\,\left\{ x_{n}\right\} \in I\right\} =\left|I\right|,
\]
where $\left\{ x\right\} $ denotes the fractional part of $x$, and
$\left|I\right|$ denotes the length of the interval $I$. There are
many examples of sequences which satisfy this property, e.g., the
Kronecker sequence $x_{n}=\alpha n$ where $\alpha$ is irrational,
and more generally (as was shown by Weyl in his pioneering 1916 paper
\cite{Weyl}) the sequence $x_{n}=\alpha_{d}n^{d}+\dots+\alpha_{1}n+\alpha_{0}$
($\alpha_{i}\in\mathbb{R}$), where at least one of the coefficients
$\alpha_{1},\dots,\alpha_{d}$ is irrational. In the metric sense,
more can be said: Weyl proved \cite{Weyl} that for \emph{any} sequence
$\left(a_{n}\right)_{n=1}^{\infty}$ of distinct integers, the sequence
$x_{n}=\alpha a_{n}$ is uniformly distributed modulo one for (Lebesgue)
almost all $\alpha\in\mathbb{R}$. This is also true for real-valued
sequences whose elements are sufficiently separated from each other
(see, e.g., \cite[Chapter 1, Corollary 4.1]{Kuipers-Niederreiter}):
if $\left(a_{n}\right)_{n=1}^{\infty}$ is a real-valued sequence,
and there exists a positive constant $\delta>0$ such that $\left|a_{n}-a_{m}\right|\ge\delta$
for each $n\ne m$, then the sequence $x_{n}=\alpha a_{n}$ is uniformly
distributed modulo one for almost all $\alpha\in\mathbb{R}$. This
condition clearly holds for real-valued, positive, \emph{lacunary}
sequences, i.e., sequences such that $a_{1}>0$, and there exists
a constant $C>1$ such that for all $n\ge1$ we have
\[
a_{n+1}\ge Ca_{n}.
\]

While the classical theory deals with the distribution of sequences
modulo one at ``large'' scales, there has been a growing interest
in recent years in the fluctuations of sequences at smaller scales.
For many sequences, it is conjectured (backed up by numerical evidence)
that the small-scale statistics (at the scale $1/N$ -- the mean
gap of the first $N$ elements of the sequence modulo one) are in
agreement with the statistics of uniform i.i.d. random points in the
unit interval (Poissonian statistics), thus demonstrating pseudo-random
behaviour for such sequences. A popular small-scale statistic is the
(normalized) gap distribution of the re-ordered first $N$ elements
of the sequence modulo one, which for many sequences is expected to
converge to the exponential distribution (``Poissonian gap statistics'')
-- the almost sure limiting distribution of the gaps in the random
model. Lacunary sequences are among the rare examples where such behaviour
has been rigorously proved to hold (in the metric sense): Rudnick
and Zaharescu proved \cite{Rudnick-Zaharescu2} Poissonian gap statistics
for almost all $\alpha\in\mathbb{R}$ for the sequence $x_{n}=\alpha a_{n}$
where $\left(a_{n}\right)_{n=1}^{\infty}$ is an integer-valued lacunary
sequence; this was recently extended to \emph{real-valued} lacunary
sequences by Chaubey and the author \cite{ChaubeyYesha}. 

Statistics in the ``mesoscopic'' regime, i.e., at the scale $L/N$,
where $L=L\left(N\right)\to\infty$ and $L=o\left(N\right)$ as $N\to\infty$,
provide further information which may capture some interesting features
of sequences. An example of such a statistic is the\emph{ number variance}
(the variance of the number of elements in random intervals, see the
definition below in our setting), famously studied for the zeros of
the Riemann zeta function, for which at small scales the number variance
is consistent with that of the eigenvalues of random matrices drawn
from the Gaussian unitary ensemble (GUE), whereas ``saturation''
occurs at larger scales (see \cite{Berry}). In the context of sequences
modulo one, only a few results have been established so far in the
mesoscopic regime, mainly concerning the leading order asymptotics
of the long-range correlations of the sequence $x_{n}=\alpha n^{2}$
(see \cite{Technau-Walker,Lutsko,Hille}); nevertheless, important
intermediate-scale statistics such as the number variance have largely
remained unexplored. The goal of this paper is to study such statistics
for real-valued lacunary sequences.

Let $\left(a_{n}\right)_{n=1}^{\infty}$ be a positive, real-valued
lacunary sequence; we are interested in the distribution of the number
of elements modulo one of the sequence $\left(x_{n}\right)_{n=1}^{\infty}=\left(\alpha a_{n}\right)_{n=1}^{\infty}$
in intervals of length $L/N$ around points $x\in[0,1)$, which we
denote by
\[
S_{N}\left(L,\alpha\right)=S_{N}\left(L,\alpha\right)\left(x\right):=\sum_{j=1}^{N}\sum_{n\in\mathbb{Z}}\chi\left(\frac{\alpha a_{j}-x+n}{L/N}\right),
\]
 where $\chi=\chi_{\left[-1/2,1/2\right]}$ is the characteristic
function of the interval $\left[-1/2,1/2\right]$.

The first statistic that we will study is the number variance
\[
\Sigma_{N}^{2}\left(L,\alpha\right):=\int_{0}^{1}\left(S_{N}\left(L,\alpha\right)\left(x\right)-L\right)^{2}\,dx,
\]
i.e., the variance of $S_{N}\left(L,\alpha\right)$, where we randomize
w.r.t. the centre of the interval $x$. We would like to show that
for generic values of $\alpha\in\mathbb{R}$, we have
\begin{equation}
\Sigma_{N}^{2}\left(L,\alpha\right)=L+o\left(L\right),\label{eq:NumberVarianceAsymp}
\end{equation}
which is in agreement with the random model. In our first main result
we show that (\ref{eq:NumberVarianceAsymp}) holds with high probability
in (essentially) the full mesoscopic regime (namely, all the way up
to $L=O\left(N^{1-\epsilon}\right)$ where $\epsilon$ is arbitrarily
small). 
\begin{thm}
\label{thm:HighProbThm}Let $\epsilon>0$, and let $I$ be a bounded
interval. Assume that $L=L\left(N\right)=O\left(N^{1-\epsilon}\right)$
as $N\to\infty$. Then (\ref{eq:NumberVarianceAsymp}) holds with
high probability w.r.t. $\alpha$: for any $\delta>0$, we have
\[
\mathrm{meas}\left\{ \text{\ensuremath{\alpha\in I:}}\left|\Sigma_{N}^{2}\left(L,\alpha\right)-L\right|>\delta L\right\} =O_{\delta,\epsilon,I}\left(N^{-\epsilon/2}\right)
\]
as $N\to\infty$. 
\end{thm}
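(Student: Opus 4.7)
My plan is to apply Chebyshev's inequality in the parameter $\alpha\in I$, which reduces the problem to two estimates: the first-moment identity $\int_I \Sigma_N^2(L,\alpha)\,d\alpha=|I|L(1+o(1))$, and the second-moment bound $\int_I\bigl(\Sigma_N^2(L,\alpha)-L\bigr)^2\,d\alpha = O_{I}(L^2 N^{-\epsilon/2})$. The natural starting point is Fourier analysis: writing the periodic function $x\mapsto S_N(L,\alpha)(x)-L$ as a Fourier series in $x$ and applying Parseval's identity yields
\[
\Sigma_N^2(L,\alpha)=\sum_{k\neq 0}c_k\,|E_N(k\alpha)|^2,\qquad E_N(t):=\sum_{j=1}^N e(ta_j),
\]
where $c_k=\bigl(\sin(\pi kL/N)/(\pi k)\bigr)^2\ll\min\bigl((L/N)^2,k^{-2}\bigr)$ and $\sum_{k\neq 0}c_k = L/N-(L/N)^2$.

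For the first moment, I would expand $|E_N(k\alpha)|^2=N+\sum_{j\neq\ell}e(k\alpha(a_j-a_\ell))$ and integrate over $\alpha\in I$. Lacunarity yields $\sum_{j\neq\ell}|a_j-a_\ell|^{-1}=O(1)$ by geometric summation, so each off-diagonal integral is $O\bigl(1/(|k||a_j-a_\ell|)\bigr)$ and the total off-diagonal contribution is $O(|k|^{-1})$. Summing against $c_k$ gives
\[
\int_I\Sigma_N^2(L,\alpha)\,d\alpha=|I|L+O\bigl(|I|L^2/N\bigr)+O\bigl((L/N)^2\log N\bigr)=|I|L(1+o(1))
\]
in the range $L=O(N^{1-\epsilon})$.

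The main obstacle is the second-moment bound. Squaring the Parseval identity and integrating yields
\[
\int_I\Sigma_N^4(L,\alpha)\,d\alpha=\sum_{k,m\neq 0}c_k c_m\sum_{j_1,\ell_1,j_2,\ell_2}\int_I e\bigl(\alpha[k(a_{j_1}-a_{\ell_1})+m(a_{j_2}-a_{\ell_2})]\bigr)\,d\alpha.
\]
The fully diagonal quadruples $j_1=\ell_1$, $j_2=\ell_2$ reproduce $|I|^2 L^2(1+o(1))$, cancelling $|I|$ times the square of the mean. The remaining quadruples split into two types: (i) those with exact cancellation $k(a_{j_1}-a_{\ell_1})+m(a_{j_2}-a_{\ell_2})=0$, for which the near-Sidon behaviour of lacunary sequences essentially restricts solutions to the ``swap'' $k=m$, $j_1=\ell_2$, $\ell_1=j_2$, contributing $O\bigl(|I|N^2\sum_k c_k^2\bigr)=O(|I|L^3/N)=O(|I|L^2 N^{-\epsilon})$; and (ii) those with nonzero argument $\xi$, where $|\int_I e(\alpha\xi)\,d\alpha|\ll|\xi|^{-1}$ combines with quantitative lower bounds on $|\xi|$ coming from the geometric separation of lacunary differences. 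The hard part is the combinatorial bookkeeping for (ii): one would sort quadruples by the ordering of the four indices and by the signs of $k,m$, and exploit $|a_r - a_s|\gtrsim a_{\max(r,s)}$ to beat the double sum. The target is a non-diagonal total of $O(|I|L^2 N^{-\epsilon/2})$, whereupon Chebyshev's inequality delivers the stated measure bound.
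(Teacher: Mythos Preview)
Your overall architecture --- Chebyshev in $\alpha$ plus a second-moment bound --- is exactly the paper's. The Parseval expansion you write is also equivalent to the paper's identity $\Sigma_N^2(L,\alpha)-L = L\,T_N(L,\alpha)-L^2/N$, so the problem really is to bound $\int_I |T_N(L,\alpha)|^2\,d\alpha$. But your plan for this second moment has a genuine gap.

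You propose to split the quadruple sum into exact zeros $\xi:=k(a_{j_1}-a_{\ell_1})+m(a_{j_2}-a_{\ell_2})=0$ and nonzeros, bounding the latter via $\bigl|\int_I e(\alpha\xi)\,d\alpha\bigr|\ll|\xi|^{-1}$ together with ``quantitative lower bounds on $|\xi|$''. For \emph{real-valued} lacunary sequences no such lower bound exists: one can have $0<|\xi|$ arbitrarily small, so $|\xi|^{-1}$ is uncontrolled. Your claim that exact zeros are essentially only the swaps is also false in general (already for $a_n=2^n$ there are nontrivial relations, e.g.\ $2\,(2^2-2^1)-1\cdot(2^3-2^2)=0$). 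The paper circumvents both issues with a device you are missing: it replaces $1_I$ by a smooth majorant $\rho\ge 1_I$, so that $\widehat{\rho}(\xi)\ll|\xi|^{-K}$ for every $K$; this localizes the integrated second moment to the set $|\xi|\le N^{\epsilon}$ (up to $O(N^{-1})$), and the entire content becomes the counting Proposition~2.3: for fixed $n_1,x_1,y_1$ with $x_1>N^{1/4}$, there are only $O(N^{\epsilon}\log N)$ triples $(n_2,x_2,y_2)$ with $|n_1(a_{x_1}-a_{y_1})-n_2(a_{x_2}-a_{y_2})|\le N^{\epsilon}$. That counting --- first pinning down $x_2$ to within $O(\log N)$ of $x_1$, then showing $n_2$ is essentially determined, then trapping $a_{y_2}$ in a short interval --- is the ``hard part'' you allude to but do not carry out, and it does not reduce to the oscillatory-integral bookkeeping you sketch. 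Once you have this count, Lemma~2.1 and the trivial bound $\widehat{\Delta}\ll 1$ give $V_N(L)=O(LN^{-1+\epsilon})$ directly, and Chebyshev finishes. (Incidentally, the first-moment computation is unnecessary: the exact identity $\Sigma_N^2-L=L T_N-L^2/N$ already centres the problem.)
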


It is desirable to extend this to an almost sure statement, which
we are able to establish in a narrower regime $L=O\left(N^{1/2-\epsilon}\right)$
(along with a technical condition on the oscillations of $L$, which
clearly holds for natural choices of $L$, e.g., when $L=N^{s}$ with
$s\le1/2-\epsilon$).
\begin{thm}
\label{thm:AlmostSureTheorem}Let $\epsilon>0$, and assume that $L=L\left(N\right)=O\left(N^{1/2-\epsilon}\right)$
and that $L\left(N+1\right)-L\left(N\right)=o\left(N^{-1/2}\right)$
as $N\to\infty$. Then for almost all $\alpha\in\mathbb{R}$, we have
\[
\Sigma_{N}^{2}\left(L,\alpha\right)=L+o\left(L\right)
\]
as $N\to\infty$.
\end{thm}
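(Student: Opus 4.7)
The plan is to upgrade the variance/Chebyshev estimate underlying Theorem~\ref{thm:HighProbThm} to an almost-sure statement via Borel--Cantelli. The starting point is the Parseval identity
\[
\Sigma_N^2(L,\alpha) \;=\; \sum_{k\ne 0}\frac{\sin^2(\pi kL/N)}{\pi^2 k^2}\,\Bigl|\sum_{j=1}^{N} e(k\alpha a_j)\Bigr|^2,
\]
obtained by expanding the $1$-periodic kernel $\sum_n\chi((y+n)/(L/N))$ in a Fourier series and applying Parseval in $x$. Averaging in $\alpha\in I$ reproduces the mean $L+o(L)$, using lacunarity on the cross terms $j_1\neq j_2$. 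In the narrower regime $L = O(N^{1/2-\epsilon})$ I expect the $L^2(I,d\alpha)$-variance of $\Sigma_N^2$ to satisfy
\[
\int_I \bigl(\Sigma_N^2(L,\alpha)-L\bigr)^2\,d\alpha \;\ll_I\; L^3/N^2,
\]
by combining the classical $L^4$ almost-orthogonality bounds for the lacunary sums $T_k(\alpha):=\sum_j e(k\alpha a_j)$ with the elementary estimate $\sum_{k\ne 0}\sin^4(\pi kL/N)/k^4\ll(L/N)^3$ (which one obtains by splitting at $|k|\asymp N/L$).

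Granting this variance bound, Chebyshev yields
\[
\mathrm{meas}\bigl\{\alpha\in I:|\Sigma_N^2(L,\alpha)-L|>\delta L\bigr\} \;\ll_{\delta,I}\; L/N^2 \;\le\; N^{-3/2-\epsilon}
\]
in the regime $L\le CN^{1/2-\epsilon}$, which is summable in $N$. By Borel--Cantelli, for a.e.\ $\alpha\in I$ and each fixed $\delta>0$ the exceptional event holds only for finitely many $N$. Letting $\delta$ range over a countable sequence tending to $0$ and exhausting $\mathbb{R}$ by bounded intervals $I$ yields the almost-sure conclusion on $\mathbb{R}$.

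The main obstacle is to carry out the sharpened variance estimate uniformly in $N$: the oscillatory factor $\sin^2(\pi kL/N)$ behaves delicately near the critical frequency $|k|\sim N/L$, and the bound $\sum_k\sin^4(\pi kL/N)/k^4\ll(L/N)^3$ must survive small changes in $L$ without degradation. This is where the hypothesis $L(N+1)-L(N)=o(N^{-1/2})$ enters: it prevents the Fourier coefficients from oscillating too violently between consecutive $N$, so that the variance estimate (and with it the summable Chebyshev bound) is uniform in $N$. An alternative route would work along a sparse polynomial subsequence $N_k$ --- where summability is automatic --- and use the same smoothness hypothesis to interpolate $\Sigma_N^2(L(N),\alpha)-\Sigma_{N_k}^2(L(N_k),\alpha)=o(L(N))$ via the Lipschitz-in-$L$ character of the Fourier representation; both routes rely critically on the smoothness of $L$.
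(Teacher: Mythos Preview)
Your claimed variance bound $\int_I(\Sigma_N^2-L)^2\,d\alpha\ll L^3/N^2$ is too strong by a full factor of $N$, and this kills the direct Borel--Cantelli argument. Using your own Parseval representation, the diagonal contribution already forces a lower bound of order $L^3/N$: writing $\Sigma_N^2-L=LT_N-L^2/N$ as in the paper, the diagonal terms $n_1=n_2$, $(x_1,y_1)=(x_2,y_2)$ in the expansion of $\int_I|T_N|^2\,d\alpha$ contribute $\asymp L/N$, so the best one can hope for is $\int_I(\Sigma_N^2-L)^2\,d\alpha\asymp L^3/N$ (and the paper indeed proves the upper bound $O(L^3 N^{-1+\epsilon})$ via its counting proposition). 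The error in your heuristic is that you cannot treat the variance as $\sum_k|\widehat f(k)|^4\cdot\mathrm{Var}(|T_k|^2)$: the cross-covariances $\int_I|T_{k_1}|^2|T_{k_2}|^2\,d\alpha$ for $k_1\ne k_2$ are not negligible and are governed exactly by the Diophantine counting the paper carries out. With the correct bound, Chebyshev only yields $\mathrm{meas}\{\alpha\in I:|\Sigma_N^2-L|>\delta L\}\ll_{\delta} LN^{-1+\epsilon/2}\ll N^{-1/2-\epsilon/2}$, which is \emph{not} summable in $N$.

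Consequently your ``alternative route'' is not an alternative but the actual argument: one passes to the subsequence $N_m=m^2$, along which the measure bound becomes $m^{-1-\epsilon}$ and Borel--Cantelli applies, and then one interpolates to all $N$ using the monotonicity of $R_N^2(L,\alpha,\Delta)$ in $N$ and $L$. Your description of the role of the hypothesis $L(N+1)-L(N)=o(N^{-1/2})$ is also off. It has nothing to do with uniformity of the variance estimate (which holds for each $N$ separately regardless of how $L$ varies). It is needed precisely in the interpolation step: between consecutive $N_{m-1}$ and $N_m$ one has $|N-N_m|\ll m$, so $|L(N)-L(N_m)|=o(m\cdot N^{-1/2})=o(1)$, which together with $N_m/N=1+O(m^{-1})$ and $L_m=o(m)$ lets one sandwich $R_N^2(L,\alpha,\Delta)$ between two subsequence quantities to which the almost-sure asymptotics already applies.
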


For slowly growing $L$ (and under an even milder condition on its
oscillations), we will be able to establish a central limit theorem
for $S_{N}\left(L,\alpha\right)$. This would hold for example when
$L=\left(\log N\right)^{t}$ with $t>0$.
\begin{thm}
\label{thm:CLT}Let $L=L\left(N\right)\to\infty$ as $N\to\infty$
such that for all $\eta>0$ we have $L=O\left(N^{\eta}\right)$, and
assume that there exists $\epsilon>0$ such that $L\left(N+1\right)-L\left(N\right)=O\left(N^{-\epsilon}\right)$.
Then for almost all $\alpha\in\mathbb{R}$, for any $\alpha<\beta$,
we have
\[
\mathrm{meas}\left\{ x\in[0,1):\,\alpha\le\frac{S_{N}\left(L,\alpha\right)\left(x\right)-L}{\sqrt{L}}\le\beta\right\} \longrightarrow\frac{1}{\sqrt{2\pi}}\int_{\alpha}^{\beta}e^{-\frac{t^{2}}{2}}\,dt
\]
as $N\to\infty$.
\end{thm}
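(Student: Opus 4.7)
The plan is to use the method of moments in the variable $x$. Since the standard Gaussian is uniquely determined by its moments, it suffices to show that for every integer $p\ge 1$ and for a.e.\ $\alpha\in\mathbb{R}$,
\[
M_{N,p}(\alpha) := \int_0^1 \left(\frac{S_N(L,\alpha)(x)-L}{\sqrt{L}}\right)^p dx \;\longrightarrow\; (p-1)!!\cdot \mathbf{1}_{p\text{ even}}.
\]
I would start from the Fourier expansion
\[
S_N(L,\alpha)(x) - L \;=\; \sum_{k\ne 0} c_k\, T_k(\alpha)\, e^{-2\pi i k x},\qquad c_k := \tfrac{\sin(\pi k L/N)}{\pi k},\quad T_k(\alpha) := \sum_{j=1}^{N} e^{2\pi i k \alpha a_j},
\]
raise to the $p$-th power and integrate against $dx$. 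Orthogonality collapses the sum onto tuples with $k_1+\cdots+k_p=0$, yielding
\[
L^{p/2}\, M_{N,p}(\alpha) \;=\; \sum_{\substack{k_1+\cdots+k_p=0 \\ k_i\ne 0}} c_{k_1}\cdots c_{k_p}\, T_{k_1}(\alpha)\cdots T_{k_p}(\alpha).
\]

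I would then split this multi-index sum according to how the entries pair up. The dominant contribution comes from the \emph{Wick pairings}: perfect matchings of $\{1,\dots,p\}$ with $k_i = -k_j$ on every paired block. Each such matching, using $c_{-k}=c_k$ and $T_{-k}(\alpha)=\overline{T_k(\alpha)}$, contributes exactly
\[
\Bigl(\sum_{k\ne 0} c_k^2\, |T_k(\alpha)|^2\Bigr)^{p/2} \;=\; \Sigma_N^2(L,\alpha)^{p/2},
\]
and there are $(p-1)!!$ such matchings when $p$ is even (and none when $p$ is odd). Here I would invoke Theorem~\ref{thm:AlmostSureTheorem}: its hypotheses are comfortably met, since $L = N^{o(1)}$ is well within $O(N^{1/2-\epsilon})$ and the oscillation assumption is much milder than the one in force. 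This yields $\Sigma_N^2(L,\alpha)^{p/2} = L^{p/2} + o(L^{p/2})$ a.s., producing the target Gaussian moment $(p-1)!!\,L^{p/2}$ up to acceptable error.

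What remains---and is where the real work lies---is to show that the \emph{unpaired} contributions (and, for odd $p$, the full sum) are $o(L^{p/2})$ almost surely in $\alpha$. The plan is to bound, for each such sub-sum $U(\alpha)$ and each bounded interval $I$, the second moment $\int_I |U(\alpha)|^2 d\alpha$. Expansion produces integrals of the form $\int_I e^{2\pi i \alpha \lambda}\,d\alpha$, where $\lambda$ is a linear combination $\sum_i (k_i a_{j_i} - k_i' a_{j_i'})$. The lacunarity $a_{n+1} \ge C a_n$ forces $|\lambda|$ to be substantially non-zero outside an essentially diagonal set of $(j,j')$-tuples---this is the mechanism underlying the almost-orthogonality of lacunary exponentials in the vein of Zygmund's classical inequality---giving a power saving in $N$. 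A frequency truncation $|k_i|\le K=N$ is harmless thanks to $|c_k|\le 1/(\pi|k|)$. Because $L$ is sub-polynomial in $N$ while the number of tuples grows only polynomially in $K$, the Borel--Cantelli criterion is satisfied along a sparse subsequence $N_j$, giving almost sure convergence along $\{N_j\}$; the condition $L(N+1)-L(N)=O(N^{-\epsilon})$, combined with monotonicity of counts in slightly inflated intervals, then lets one interpolate back to the full sequence. The principal obstacle is precisely this unpaired second-moment estimate: since $p$ is arbitrary, one needs an almost-orthogonality principle for products of lacunary exponential sums robust enough to beat $L^{p/2}$ after summing over $O(N^p)$ frequency tuples, and the sub-polynomial regime $L=N^{o(1)}$ is exactly what keeps this delicate balance tractable.
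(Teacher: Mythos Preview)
Your strategy is sound and will work, but it differs in organization from the paper's proof. The paper does not separate the $p$-th centred moment into Wick-paired and off-diagonal parts in the frequency variables $k_i$. Instead, it expands the \emph{raw} moments $\mathbb{E}[S_N^k]$ via the Stirling-number identity
\[
\mathbb{E}\bigl[S_N(L,\alpha)^k\bigr]\;=\;L+L\sum_{j=2}^{k}\begin{Bmatrix}k\\ j\end{Bmatrix}R_N^{j}(L,\alpha,\Delta),
\]
and proves the much sharper correlation asymptotic $R_N^{j}(L,\alpha,\Delta)=L^{j-1}+O(L^{-s})$ for every $j\ge2$ and every $s>0$ (Proposition~\ref{prop:MainCLTProp}). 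This yields $\mathbb{E}[S_N^k]=\mu_k^{\mathrm{Poisson}}(L)+O(L^{-s})$, and then a short moment-generating-function argument (Lemma~\ref{lem:PoissonToNormal}) converts Poisson moments into Gaussian moments after centring and scaling. In other words, the paper factors the problem as ``correlations $\Rightarrow$ Poisson $\Rightarrow$ Gaussian'' rather than ``Wick pairing $\Rightarrow$ Gaussian directly''. Your appeal to Theorem~\ref{thm:AlmostSureTheorem} for the paired part is replaced in the paper by the $j=2$ case of Proposition~\ref{prop:MainCLTProp}, and your unpaired second-moment bound is essentially the content of Lemma~\ref{lem:EffectiveVar} (sourced from \cite{ChaubeyYesha}) applied for each $j$. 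What the paper's route buys is that one avoids the inclusion--exclusion bookkeeping among overlapping Wick matchings (tuples with several simultaneous pairings $k_i=-k_j$), which in your scheme must be shown separately to be $o(L^{p/2})$; going through correlations and Poisson moments makes this cancellation automatic. What your route buys is that only the weak variance asymptotic $\Sigma_N^2=L+o(L)$ is needed for the main term, rather than the full $O(L^{-s})$ precision on every $R_N^j$.
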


We would like to stress the difference between (\ref{eq:NumberVarianceAsymp})
and some weaker notions of long-range Poissonian correlations, as
studied, e.g., in \cite{Technau-Walker,Lutsko,Hille}. Note that the
number variance $\Sigma_{N}^{2}\left(L,\alpha\right)$ can be expressed
in terms of the pair correlation function. Indeed, a direct calculation
shows (see, e.g., \cite{Marklof}) that 
\begin{equation}
\Sigma_{N}^{2}\left(L,\alpha\right)=L-L^{2}+LR_{N}^{2}\left(L,\alpha,\Delta\right)\label{eq:SigNtoRN}
\end{equation}
where
\begin{equation}
R_{N}^{2}\left(L,\alpha,\Delta\right)=\frac{1}{N}\sum_{i\ne j=1}^{N}\sum_{n\in\mathbb{Z}}\Delta\left(\frac{\alpha a_{i}-\alpha a_{j}+n}{L/N}\right)\label{eq:pair_corr_fun}
\end{equation}
is the (scaled) pair correlation function of $\left(\alpha a_{n}\right)_{n=1}^{\infty}$
with respect to the test function 
\[
\Delta=\max\left\{ 1-\left|x\right|,0\right\} .
\]
Hence, (\ref{eq:NumberVarianceAsymp}) is equivalent to 
\begin{equation}
R_{N}^{2}\left(L,\alpha,\Delta\right)=L+o\left(1\right).\label{eq:PairCorrelationAsymp}
\end{equation}

We thus see that (\ref{eq:PairCorrelationAsymp}), and therefore (\ref{eq:NumberVarianceAsymp}),
is a significantly stronger statement then long-range Poissonian pair
correlation in the sense of $R_{N}^{2}\left(L,\alpha,\Delta\right)=L+o\left(L\right)$,
where the error term is insufficient for determining the asymptotics
of the number variance. Similarly, for $k\ge2$, consider the $k$-level
correlation function
\begin{equation}
R_{N}^{k}\left(L,\alpha,\Delta\right)=\frac{1}{N}\sum_{\substack{j_{1},\dots,j_{k}=1\\
\mathrm{distinct}
}
}^{N}\sum_{n_{1},\dots,n_{k-1}\in\mathbb{Z}}\Delta\left(\frac{\alpha a_{j_{1}}-\alpha a_{j_{k}}+n_{1}}{L/N},\dots,\frac{\alpha a_{j_{k-1}}-\alpha a_{j_{k}}+n_{k-1}}{L/N}\right);\label{eq:k-level_corr}
\end{equation}
Proposition \ref{prop:MainCLTProp}, which is the main ingredient
in the proof of Theorem \ref{thm:CLT}, is notably stronger than long-range
Poissonian higher correlations in the sense of $R_{N}^{k}\left(L,\alpha,\Delta\right)=L^{k}+o\left(L^{k}\right)$,
which would be insufficient for concluding Theorem \ref{thm:CLT}.
We also remark that while the condition $L\left(N\right)\to\infty$
in Theorem \ref{thm:CLT} is essential, Theorems \ref{thm:HighProbThm}
and \ref{thm:AlmostSureTheorem} also hold for fixed $L$, thus extending
the results of \cite{Rudnick-Technau}.

\section{The number variance}

By the Poisson summation formula, we have the following identity for
the pair correlation function (\ref{eq:pair_corr_fun})
\begin{equation}
R_{N}^{2}\left(L,\alpha,\Delta\right)=L-\frac{L}{N}+T_{N}\left(L,\alpha\right),\label{eq:RNtoTN}
\end{equation}
where
\[
T_{N}\left(L,\alpha\right)=\frac{L}{N^{2}}\sum_{i\ne j=1}^{N}\sum_{0\ne n\in\mathbb{Z}}\widehat{\Delta}\left(\frac{nL}{N}\right)e\left(n\alpha\left(a_{i}-a_{j}\right)\right)
\]
with 
\[
\widehat{\Delta}\left(x\right)=\frac{\sin^{2}\left(\pi x\right)}{\pi^{2}x^{2}}
\]
(we have used the standard notation $e\left(z\right)=e^{2\pi iz}).$

We fix use a smooth, compactly supported, non-negative weight function
$\rho\in C_{c}^{\infty}\left(\mathbb{R}\right),\,\rho\ge0$, and denote
the weighted $L^{2}-$norm of $T_{n}\left(L,\alpha\right)$ by
\begin{align*}
V_{N}\left(L\right) & =\int\left|T_{N}\left(L,\alpha\right)\right|^{2}\rho\left(\alpha\right)\,d\alpha=\frac{L^{2}}{N^{4}}\sum_{\substack{0\ne n_{1}\in\mathbb{Z}\\
0\ne n_{2}\in\mathbb{Z}
}
}\widehat{\Delta}\left(\frac{n_{1}L}{N}\right)\widehat{\Delta}\left(\frac{n_{2}L}{N}\right)w\left(n_{1},n_{_{2}},N\right)
\end{align*}
where
\[
w\left(n_{1},n_{_{2}},N\right)=\sum_{\substack{1\le x_{1}\ne y_{1}\le N\\
1\le x_{2}\ne y_{2}\le N
}
}\widehat{\rho}\left(n_{1}\left(a_{x_{1}}-a_{y_{1}}\right)-n_{2}\left(a_{x_{2}}-a_{y_{2}}\right)\right);
\]
the goal of the rest of this section is to give an upper bound for
$V_{N}\left(L\right)$.

We first observe the following identity which will be useful for estimating
sums involving $\widehat{\Delta}$. 
\begin{lem}
\label{lem:Fourier_sum}Let $1\le L<N$. We have
\[
\sum_{n\in\mathbb{Z}}\widehat{\Delta}\left(\frac{nL}{N}\right)=\frac{N}{L}.
\]
\end{lem}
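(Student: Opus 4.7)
My plan is to deduce the identity from the Poisson summation formula. The key observation is that, by definition, $\widehat{\Delta}$ is the Fourier transform of the triangle function $\Delta(x) = \max\{1-|x|,0\}$, which is continuous, even, and compactly supported on $[-1,1]$.

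I would apply Poisson summation to the dilated function $g(x) := \widehat{\Delta}(xL/N)$. A direct change of variables, combined with Fourier inversion and the evenness of $\Delta$, gives
\[
\widehat{g}(\xi) \;=\; \frac{N}{L}\,\Delta\!\left(\frac{\xi N}{L}\right).
\]
Poisson summation then yields
\[
\sum_{n\in\mathbb{Z}}\widehat{\Delta}\!\left(\frac{nL}{N}\right) \;=\; \sum_{n\in\mathbb{Z}} \widehat{g}(n) \;=\; \frac{N}{L}\sum_{n\in\mathbb{Z}}\Delta\!\left(\frac{nN}{L}\right).
\]

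To finish, I would invoke the hypothesis $L<N$, which makes the dilation factor $N/L$ strictly larger than $1$; consequently $|nN/L|>1$ for every non-zero integer $n$, and since $\Delta$ is supported in $[-1,1]$ all such terms vanish. Only the term $n=0$ contributes, giving $\Delta(0)=1$, and the right-hand side collapses to $N/L$, as claimed.

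I do not foresee any real obstacle. The applicability of Poisson summation here is routine, since $\Delta\in C_c(\mathbb{R})$ and $\widehat{\Delta}(x)$ decays like $x^{-2}$, so both the spatial and frequency sums converge absolutely; in fact, after dualising only a single term remains, so no convergence subtlety enters.
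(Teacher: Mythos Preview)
Your proposal is correct and is essentially the same argument as the paper's: both apply Poisson summation to pass from the sum of $\widehat{\Delta}(nL/N)$ to the sum of $\Delta(nN/L)$, and then use that $N/L>1$ together with the support of $\Delta$ to see that only the $n=0$ term survives. The only cosmetic difference is that the paper phrases it via the dilation $\Delta_{N/L}(x)=\Delta(Nx/L)$ rather than via $g=\widehat{\Delta}(\cdot\,L/N)$, which amounts to applying Poisson summation in the other direction.
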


\begin{proof}
Let $\Delta_{N/L}\left(x\right)=\Delta\left(\frac{N}{L}x\right)$.
By the Poisson summation formula we have
\begin{align*}
\sum_{n\in\mathbb{Z}}\widehat{\Delta}\left(\frac{nL}{N}\right) & =\frac{N}{L}\sum_{n\in\mathbb{Z}}\widehat{\Delta_{N/L}}\left(n\right)=\frac{N}{L}\sum_{n\in\mathbb{Z}}\Delta_{N/L}\left(n\right)=\frac{N}{L}\Delta\left(0\right)=\frac{N}{L}.
\end{align*}
\end{proof}
In the next lemma, we will see that up to an error term of order $O\left(N^{-1}\right),$
the ranges of the summations defining $V_{N}\left(L\right)$ can be
significantly restricted. 
\begin{lem}
\label{lem:RestrictedSum}Let $1\le L<N$ and let $\text{\ensuremath{\epsilon>0}}$.
We have
\begin{align*}
V_{N}\left(L\right) & =\frac{L^{2}}{N^{4}}\sum_{\substack{0\ne\left|n_{1}\right|\le N^{4}\\
0\ne\left|n_{2}\right|\le N^{4}
}
}\widehat{\Delta}\left(\frac{n_{1}L}{N}\right)\widehat{\Delta}\left(\frac{n_{2}L}{N}\right)\tilde{w}\left(n_{1},n_{2},N\right)+O\left(N^{-1}\right)
\end{align*}
where
\[
\tilde{w}\left(n_{1},n_{2},N\right)=\sum_{\substack{1\le x_{1}\ne y_{1}\le N\\
1\le x_{2}\ne y_{2}\le N\\
\max\left\{ x_{1},x_{2},y_{1},y_{2}\right\} >N^{1/4}\\
\left|n_{1}\left(a_{x_{1}}-a_{y_{1}}\right)-n_{2}\left(a_{x_{2}}-a_{y_{2}}\right)\right|\le N^{\epsilon}
}
}\widehat{\rho}\left(n_{1}\left(a_{x_{1}}-a_{y_{1}}\right)-n_{2}\left(a_{x_{2}}-a_{y_{2}}\right)\right).
\]
\end{lem}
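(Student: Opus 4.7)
The plan is to split the difference between $V_{N}(L)$ and the truncated expression on the right-hand side of the lemma into three pieces, each of which I will show contributes $O(N^{-1})$. The two main tools will be the decay bound $|\widehat{\Delta}(x)|\le\min\{1,(\pi x)^{-2}\}$ (which, together with Lemma \ref{lem:Fourier_sum}, yields both $\sum_{n\in\mathbb{Z}}|\widehat{\Delta}(nL/N)|=N/L$ and $\sum_{|n|>N^{4}}|\widehat{\Delta}(nL/N)|\ll(N^{2}L^{2})^{-1}$), and the Schwartz decay $|\widehat{\rho}(t)|\ll_{A}(1+|t|)^{-A}$ available for any $A>0$, which follows from $\rho\in C_{c}^{\infty}(\mathbb{R})$.

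First I would remove the tails $|n_{1}|>N^{4}$ (and by symmetry $|n_{2}|>N^{4}$). Using the trivial bound $|w(n_{1},n_{2},N)|\le N^{4}\|\widehat{\rho}\|_{\infty}$, this contribution is
\[
\ll\frac{L^{2}}{N^{4}}\Bigl(\sum_{|n_{1}|>N^{4}}|\widehat{\Delta}(n_{1}L/N)|\Bigr)\Bigl(\sum_{n_{2}\ne0}|\widehat{\Delta}(n_{2}L/N)|\Bigr)\cdot N^{4}\ll\frac{L^{2}}{N^{4}}\cdot\frac{1}{N^{2}L^{2}}\cdot\frac{N}{L}\cdot N^{4}\ll\frac{1}{NL}.
\]

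Next, within the truncated range $|n_{i}|\le N^{4}$, I would discard the tuples with $\max\{x_{1},x_{2},y_{1},y_{2}\}\le N^{1/4}$: there are at most $(N^{1/4})^{4}=N$ such tuples, so bounding $\widehat{\rho}$ by its sup-norm and summing $|\widehat{\Delta}|$ freely over all of $\mathbb{Z}$, the contribution is
\[
\ll\frac{L^{2}}{N^{4}}\Bigl(\sum_{n}|\widehat{\Delta}(nL/N)|\Bigr)^{2}\cdot N\ll\frac{L^{2}}{N^{4}}\cdot\frac{N^{2}}{L^{2}}\cdot N=\frac{1}{N}.
\]
Finally, with the spatial condition $\max>N^{1/4}$ now in force, I would remove the tuples for which $|n_{1}(a_{x_{1}}-a_{y_{1}})-n_{2}(a_{x_{2}}-a_{y_{2}})|>N^{\epsilon}$. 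Applying the Schwartz decay with $A\ge 3/\epsilon$, each such value of $\widehat{\rho}$ is $\ll_{A}N^{-3}$; bounding the number of tuples by $N^{4}$ and the $n_{i}$-sums by $N/L$ each yields
\[
\ll\frac{L^{2}}{N^{4}}\cdot\frac{N^{2}}{L^{2}}\cdot N^{4}\cdot N^{-3}=\frac{1}{N}.
\]

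I do not anticipate any serious obstacle: the argument is essentially a bookkeeping exercise combining Lemma \ref{lem:Fourier_sum}, the quadratic tail of $\widehat{\Delta}$, and the Schwartz decay of $\widehat{\rho}$. The only minor care needed is to organise $w-\tilde{w}$ (in the truncated $n_{i}$-range) as a union of the two forbidden events, namely the spatial cluster $\max\le N^{1/4}$ and the analytic tail $|n_{1}(a_{x_{1}}-a_{y_{1}})-n_{2}(a_{x_{2}}-a_{y_{2}})|>N^{\epsilon}$, via a union bound so that nothing is double-counted above the claimed order.
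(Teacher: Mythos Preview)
Your proposal is correct and follows essentially the same approach as the paper: both arguments isolate the same three error contributions (the $n_i$-tails via the quadratic decay of $\widehat{\Delta}$, the spatial cluster $\max\le N^{1/4}$ via the trivial bound $\widehat{\rho}\ll 1$, and the analytic tail $|n_1(a_{x_1}-a_{y_1})-n_2(a_{x_2}-a_{y_2})|>N^{\epsilon}$ via the Schwartz decay of $\widehat{\rho}$) and combine them with Lemma~\ref{lem:Fourier_sum}. The only cosmetic difference is the order of operations: you truncate in $n_i$ first and then pass from $w$ to $\tilde w$, whereas the paper first bounds $w-\tilde w\ll N$ uniformly in $n_1,n_2$ and afterwards removes the $n_i$-tails from $\tilde w$.
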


\begin{proof}
We have
\begin{gather*}
w\left(n_{1},n_{_{2}},N\right)-\tilde{w}\left(n_{1},n_{2},N\right)\ll\sum_{\substack{x_{1}\ne y_{1}\ge1\\
x_{2}\ne y_{2}\ge1\\
\max\left\{ x_{1},x_{2},y_{1},y_{2}\right\} \le N^{1/4}
}
}\left|\widehat{\rho}\left(n_{1}\left(a_{x_{1}}-a_{y_{1}}\right)-n_{2}\left(a_{x_{2}}-a_{y_{2}}\right)\right)\right|\\
+\sum_{\substack{1\le x_{1}\ne y_{1}\le N\\
1\le x_{2}\ne y_{2}\le N\\
\left|n_{1}\left(a_{x_{1}}-a_{y_{1}}\right)-n_{2}\left(a_{x_{2}}-a_{y_{2}}\right)\right|>N^{\epsilon}
}
}\left|\widehat{\rho}\left(n_{1}\left(a_{x_{1}}-a_{y_{1}}\right)-n_{2}\left(a_{x_{2}}-a_{y_{2}}\right)\right)\right|\ll N,
\end{gather*}
where we bounded the first summation using the bound $\widehat{\rho}\ll1$
and the second summation using $\widehat{\rho}\left(x\right)\ll x^{-k}$
for all $k>0$. Thus,
\begin{align*}
\frac{L^{2}}{N^{4}}\sum_{\substack{0\ne n_{1}\in\mathbb{Z}\\
0\ne n_{2}\in\mathbb{Z}
}
}\widehat{\Delta}\left(\frac{n_{1}L}{N}\right)\widehat{\Delta}\left(\frac{n_{2}L}{N}\right)\left(w\left(n_{1},n_{_{2}},N\right)-\tilde{w}\left(n_{1},n_{2},N\right)\right) & \ll\frac{L^{2}}{N^{3}}\left(\sum_{n\in\mathbb{Z}}\widehat{\Delta}\left(\frac{nL}{N}\right)\right)^{2}=\frac{1}{N}
\end{align*}
where in the last equality we used Lemma \ref{lem:Fourier_sum}. Finally,
by bounding $\tilde{w}$ trivially and applying the bound $\widehat{\Delta}\left(x\right)\ll x^{-2}$,
we have
\begin{gather*}
\frac{L^{2}}{N^{4}}\sum_{\substack{0\ne n_{1}\in\mathbb{Z}\\
0\ne n_{2}\in\mathbb{Z}\\
\max\left\{ \left|n_{1}\right|,\left|n_{2}\right|\right\} >N^{4}
}
}\widehat{\Delta}\left(\frac{n_{1}L}{N}\right)\widehat{\Delta}\left(\frac{n_{2}L}{N}\right)\tilde{w}\left(n_{1},n_{2},N\right)\ll L^{2}\sum_{m>N^{4}}\widehat{\Delta}\left(\frac{mL}{N}\right)\sum_{n\in\mathbb{Z}}\widehat{\Delta}\left(\frac{nL}{N}\right)\\
=NL\sum_{m>N^{4}}\widehat{\Delta}\left(\frac{mL}{N}\right)\ll\frac{N^{3}}{L}\sum_{m>N^{4}}m^{-2}\ll\frac{1}{LN}
\end{gather*}
which concludes the proof.
\end{proof}
We will now analyze when the summation defining $\tilde{w}$ does
not vanish.

\begin{prop}
\label{prop:MainProp}Fix $n_{1}$ such that $0<\left|n_{1}\right|\le N^{4}$,
and $x_{1},y_{1}$ such that $1\le y_{1}<x_{1}\le N$, $x_{1}>N^{1/4}$.
Then there exist at most $O\left(N^{\epsilon}\log N\right)$ values
of $n_{2},x_{2},y_{2}$ such that $0<\left|n_{2}\right|\le N^{4}$,
$x_{2}\le x_{1},$ $1\le y_{2}<x_{2}\le N$, and 
\begin{align}
\left|n_{1}\left(a_{x_{1}}-a_{y_{1}}\right)-n_{2}\left(a_{x_{2}}-a_{y_{2}}\right)\right| & \le N^{\epsilon}.\label{eq:SmallProduct}
\end{align}
\end{prop}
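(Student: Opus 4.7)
Set $A := n_{1}(a_{x_{1}} - a_{y_{1}})$. The hypothesis $x_{1} > N^{1/4}$ together with lacunarity gives $|A| \geq (1 - C^{-1})\,a_{1}\,C^{N^{1/4}-1}$, which is super-polynomial in $N$; in particular $|A| \gg N^{\epsilon}$ for $N$ large. By symmetry it suffices to treat the case $n_{2} > 0$, $A > 0$.

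The plan is a two-step counting argument: first use lacunarity to reduce $x_{2}$ to a short range, then for each such $x_{2}$ count pairs $(n_{2}, y_{2})$. In the first step, from the approximate equation $n_{2}(a_{x_{2}} - a_{y_{2}}) = A + O(N^{\epsilon})$ and the lacunary estimate $a_{x_{2}} - a_{y_{2}} \in \bigl[(1 - C^{-1})a_{x_{2}},\, a_{x_{2}}\bigr]$ I derive the necessary ``window'' condition
\[
n_{2}\,a_{x_{2}} \in \bigl[A - N^{\epsilon},\; \tfrac{C}{C-1}(A + N^{\epsilon})\bigr],
\]
whose endpoints have bounded ratio. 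Combined with $n_{2} \leq N^{4}$, this restricts $a_{x_{2}}$ to a multiplicative range of width $O(N^{4})$, so by lacunarity the admissible $x_{2}$'s form a set of $O(\log N)$ consecutive indices lying just below $x_{1}$.

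For the second step, fix an admissible $x_{2}$ and set $d := a_{x_{2}} - a_{y_{2}}$. For each $y_{2} < x_{2}$ the number of valid integers $n_{2} \in [1, N^{4}]$ is at most $2N^{\epsilon}/d + 1$. I split by the size of $d$: when $d \leq 2N^{\epsilon}$, lacunarity forces $a_{x_{2}} = O(N^{\epsilon})$ and thus $x_{2} = O(\epsilon\log N)$; combined with the telescoping bound $\sum_{y_{2} < x_{2}} 1/d = O(1/a_{x_{2}})$ this small-gap case contributes only $O(N^{\epsilon})$. When $d > 2N^{\epsilon}$ each pair contributes at most one integer $n_{2}$, and I dualize: for each integer $n \in [1, N^{4}]$, Step~1 applied to $n$ produces $O(1)$ admissible $x_{2}$'s, and then $a_{y_{2}} = a_{x_{2}} - A/n + O(N^{\epsilon}/n)$ pins $a_{y_{2}}$ to an interval of length $2N^{\epsilon}/n$ containing $O(\epsilon \log N)$ lacunary elements. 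Summing all contributions gives the bound $O(N^{\epsilon}\log N)$.

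The main obstacle will be the large-gap sub-case: the naive dual count gives up to $O(N^{4})$ admissible integers $n$, and the improvement to $O(N^{\epsilon})$ relies on the multiplicative alignment $n\,a_{x_{2}} \asymp A$ together with the super-polynomial size of $A$ afforded by $x_{1} > N^{1/4}$, which together force the Diophantine coincidence $|nd - A| \leq N^{\epsilon}$ to be realized only for a very restricted family of $(n, x_{2}, y_{2})$. Carrying out this reduction — exploiting that consecutive values of $a_{x_{2}} - a_{y_{2}}$ (in $y_{2}$) are separated by $\geq a_{1}(C-1)$ and that the admissible $a_{x_{2}}$'s are themselves geometrically distributed — is the technical heart of the argument.
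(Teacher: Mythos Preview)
Your Step~1 (restricting $x_{2}$ to $O(\log N)$ values just below $x_{1}$) is correct and matches the paper. But note an immediate consequence you do not use: since $x_{2} \geq x_{1} - O(\log N) > N^{1/4} - O(\log N)$, we have $a_{x_{2}} \gg C^{N^{1/4}}$, and hence $d = a_{x_{2}} - a_{y_{2}} \geq (1-C^{-1})a_{x_{2}}$ is itself super-polynomial in $N$. So your ``small-$d$'' case $d \leq 2N^{\epsilon}$ is vacuous for large $N$; your handling of it (forcing $x_{2} = O(\epsilon\log N)$) in fact contradicts Step~1, though this does no harm since the case is empty.

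The real gap is in the large-$d$ case. Your dualization --- summing over $n\in[1,N^{4}]$ and for each $n$ finding $O(\log N)$ pairs $(x_{2},y_{2})$ --- gives $O(N^{4}\log N)$, as you acknowledge. You then say that reducing the admissible $n$'s to $O(N^{\epsilon})$ is ``the technical heart'' but do not carry it out, and the hints you give (separation of consecutive $d$-values, geometric distribution of $a_{x_{2}}$) do not by themselves force this reduction: as $y_{2}$ ranges over $1,\dots,x_{2}-1$, the quantity $A/d$ sweeps a multiplicative interval of bounded ratio, which can contain up to $\asymp A/a_{x_{2}} \leq N^{4}$ integers.

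The paper avoids dualization entirely. After fixing $x_{2}$, it observes first that $n_{2}$ is uniquely determined by $y_{2}$ (since $|n_{2} - A/d| \leq N^{\epsilon}/d \ll C^{-N^{1/4}}$). It then splits on $y_{2}$: the $O(\log N)$ values with $x_{2}-y_{2} \leq 5\log_{C}N$ are counted trivially; for the remaining $y_{2}$ one has $a_{y_{2}}/a_{x_{2}} < N^{-5}$, so $n_{2}a_{x_{2}}(1+O(N^{-5})) = A$, which forces \emph{all} such $y_{2}$ to yield the \emph{same} integer $n_{2}$ (this is the key step your outline lacks). With $n_{2}$ now fixed, the inequality $|n_{2}d - A| \leq N^{\epsilon}$ confines $a_{y_{2}}$ to an interval of length $O(N^{\epsilon})$, which by the uniform gap bound $a_{j+1}-a_{j} \gg 1$ contains $O(N^{\epsilon})$ elements. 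Multiplying by the $O(\log N)$ choices of $x_{2}$ gives the stated bound.
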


\begin{proof}
We follow the ideas of \cite{Rudnick-Zaharescu,Rudnick-Technau}.
We have
\[
\left|n_{1}\right|\left(a_{x_{1}}-a_{y_{1}}\right)\ge a_{x_{1}}-a_{x_{1}-1}=a_{x_{1}}\left(1-\frac{a_{x_{1}-1}}{a_{x_{1}}}\right)\ge\left(1-\frac{1}{C}\right)a_{x_{1}};
\]
on the other hand,
\[
\left|n_{2}\right|\left(a_{x_{2}}-a_{y_{2}}\right)\le N^{4}a_{x_{2}}=N^{4}a_{x_{1}}\frac{a_{x_{2}}}{a_{x_{1}}}\le a_{x_{1}}\frac{N^{4}}{C^{x_{1}-x_{2}}}.
\]
Substituting in (\ref{eq:SmallProduct}), we obtain

\[
1-\frac{1}{C}-\frac{N^{4}}{C^{x_{1}-x_{2}}}\le N^{\epsilon}a_{x_{1}}^{-1}.
\]
Since $x_{1}>N^{1/4}$, we have $a_{x_{1}}\ge a_{1}C^{x_{1}-1}>a_{1}C^{N^{1/4}-1}$,
and hence
\[
1-\frac{1}{C}-\frac{N^{4}}{C^{x_{1}-x_{2}}}\le N^{\epsilon}a_{1}^{-1}C^{-\left(N^{1/4}-1\right)},
\]
and therefore for sufficiently large $N$ we have
\[
C^{x_{1}-x_{2}}\le\frac{N^{4}}{1-\frac{1}{C}-N^{\epsilon}a_{1}^{-1}C^{-\left(N^{1/4}-1\right)}}\ll N^{4}
\]
so that $x_{1}-x_{2}\ll\log N$. Thus, there are at most $O\left(\log N\right)$
possible values for $x_{2}$, and moreover we conclude that $x_{2}\gg N^{1/4}$,
and hence $a_{x_{2}}\gg C^{N^{1/4}}.$

We now fix $x_{2}$. Since
\[
\left|n_{2}-n_{1}\frac{a_{x_{1}}-a_{y_{1}}}{a_{x_{2}}-a_{y_{2}}}\right|\le\frac{N^{\epsilon}}{a_{x_{2}}-a_{y_{2}}}\le\frac{N^{\epsilon}}{\left(1-\frac{1}{C}\right)a_{x_{2}}}\ll\frac{N^{\epsilon}}{C^{N^{1/4}}},
\]
we see that for sufficiently large $N$, the integer $n_{2}$ is uniquely
determined by the values of $x_{1},x_{2},y_{1},y_{2},n_{1}$. It is
therefore sufficient to bound the number of possible values of $y_{2}$.
There are $O\left(\log N\right)$ values of $y_{2}$ such that $x_{2}-y_{2}\le5\log_{C}N$.
We will therefore count the number of possible values of $y_{2}$
such that $x_{2}-y_{2}>5\log_{C}N$. For such $y_{2}$ we have
\[
a_{y_{2}}=a_{x_{2}}\frac{a_{y_{2}}}{a_{x_{2}}}\le\frac{a_{x_{2}}}{C^{x_{2}-y_{2}}}<\frac{a_{x_{2}}}{N^{5}}
\]
and therefore
\begin{align*}
n_{1}\left(a_{x_{1}}-a_{y_{1}}\right) & =n_{2}\left(a_{x_{2}}-a_{y_{2}}\right)+O\left(N^{\epsilon}\right)=n_{2}a_{x_{2}}\left(1-\frac{a_{y_{2}}}{a_{x_{2}}}+O\left(\frac{N^{\epsilon}}{C^{N^{1/4}}}\right)\right)\\
 & =n_{2}a_{x_{2}}\left(1+O\left(N^{-5}\right)\right).
\end{align*}
Hence, given $\left(y_{2},n_{2}\right)$ and $\left(y_{2}',n_{2}'\right)$
such that $x_{2}-y_{2}>5\log_{C}N$ and $x_{2}-y_{2}'>5\log_{C}N$,
we have
\[
n_{2}a_{x_{2}}\left(1+O\left(N^{-5}\right)\right)=n_{2}'a_{x_{2}}\left(1+O\left(N^{-5}\right)\right)
\]
and since $\left|n_{2}\right|\le N^{4}$ we conclude that
\[
n_{2}'=n_{2}+O\left(N^{-1}\right)
\]
so in fact $n_{2}'=n_{2}$. We therefore see that the value of $n_{2}$
is identical for each $y_{2}$ such that $x_{2}-y_{2}>5\log_{C}N$.
But, for such $y_{2}$, (\ref{eq:SmallProduct}) gives
\[
a_{y_{2}}\in\left[a_{x_{2}}-\frac{n_{1}\left(a_{x_{1}}-a_{y_{1}}\right)}{n_{2}}-\frac{N^{\epsilon}}{n_{2}},a_{x_{2}}-\frac{n_{1}\left(a_{x_{1}}-a_{y_{1}}\right)}{n_{2}}+\frac{N^{\epsilon}}{n_{2}}\right]
\]
so that $a_{y_{2}}$ lies in an interval of length $O\left(N^{\epsilon}\right)$,
and since
\[
a_{n+1}-a_{n}=a_{n+1}\left(1-\frac{a_{n}}{a_{n+1}}\right)\ge a_{n+1}\left(1-\frac{1}{C}\right)\gg1
\]
there could be at most $O\left(N^{\epsilon}\right)$ values of $y_{2}$
in this interval. 
\end{proof}
As an immediate corollary of Lemma \ref{lem:RestrictedSum} and Proposition
\ref{prop:MainProp}, we obtain an upper bound for $V_{N}\left(L\right)$.
\begin{cor}
Let $1\le L<N$ and let $\text{\ensuremath{\epsilon>0}}$. We have
\begin{equation}
V_{N}\left(L\right)=O\left(LN^{-1+\epsilon}\right).\label{eq:VarBound}
\end{equation}
\end{cor}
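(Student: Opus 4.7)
The plan is to combine Lemma \ref{lem:RestrictedSum} with Proposition \ref{prop:MainProp} via a symmetrization argument. By Lemma \ref{lem:RestrictedSum}, it suffices to bound the restricted double sum defining $V_N(L)$ in terms of $\tilde{w}$, up to the stated $O(N^{-1})$ error. The key observation is that the conditions defining $\tilde{w}(n_1,n_2,N)$ are invariant under swapping within each pair $(x_i,y_i)$ (since $|\cdot|$ appears in the key inequality and $\widehat{\rho}$ can be bounded by $|\widehat{\rho}|$) and under interchanging the two triples $(n_1,x_1,y_1)\leftrightarrow (n_2,x_2,y_2)$ (since the outer summation over $n_1,n_2$ and the $\widehat{\Delta}$-weights are symmetric in the two indices). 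Using these symmetries, at the cost of a bounded multiplicative constant, I would restrict attention to tuples with $x_1>y_1$, $x_2>y_2$, and $x_1\ge x_2$; the condition $\max\{x_1,y_1,x_2,y_2\}>N^{1/4}$ then forces $x_1>N^{1/4}$, placing us exactly in the hypotheses of Proposition \ref{prop:MainProp}.

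With this reduction in place, I would bound $\widehat{\rho}\ll 1$ trivially inside $\tilde{w}$, and also bound $|\widehat{\Delta}(n_2 L/N)|\le \widehat{\Delta}(0)=1$ (using non-negativity of $\widehat{\Delta}$). For each fixed $(n_1,x_1,y_1)$ with $0<|n_1|\le N^4$, $N^{1/4}<x_1\le N$, and $1\le y_1<x_1$, Proposition \ref{prop:MainProp} supplies at most $O(N^\epsilon \log N)$ admissible triples $(n_2,x_2,y_2)$. Summing trivially over $(x_1,y_1)$ gives at most $N^2$ pairs, and summing the weight $|\widehat{\Delta}(n_1 L/N)|$ over $0\neq n_1\in\mathbb{Z}$ is controlled by Lemma \ref{lem:Fourier_sum}, since $\widehat{\Delta}\ge 0$ yields $\sum_{n\in\mathbb{Z}}\widehat{\Delta}(nL/N)=N/L$.

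Combining these ingredients gives
\[
V_N(L) \ll \frac{L^2}{N^4}\cdot \frac{N}{L}\cdot N^2 \cdot N^\epsilon\log N + O(N^{-1}) \ll LN^{-1+\epsilon}\log N,
\]
and absorbing the $\log N$ factor into a slightly larger $\epsilon$ yields the claimed bound (\ref{eq:VarBound}). The genuine work has already been carried out in Proposition \ref{prop:MainProp}; the only real obstacle here is bookkeeping — carefully verifying that the symmetrization faithfully captures every tuple in $\tilde{w}$ exactly $O(1)$ times so that the restriction to the regime $x_1>N^{1/4}$, $x_2\le x_1$ of Proposition \ref{prop:MainProp} does not cost more than a constant factor. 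Everything else reduces to the two one-dimensional estimates already recorded in Lemma \ref{lem:Fourier_sum} and the trivial counting bound on $(x_1,y_1)$.
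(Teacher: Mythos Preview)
Your proposal is correct and follows essentially the same approach as the paper: both apply Lemma~\ref{lem:RestrictedSum}, symmetrize to reduce to the regime $y_1<x_1$, $y_2<x_2$, $x_2\le x_1$ (hence $x_1>N^{1/4}$), bound $\widehat{\rho}\ll 1$ and $\widehat{\Delta}(n_2L/N)\ll 1$, invoke Proposition~\ref{prop:MainProp} to count the inner triples, and then sum the remaining $\widehat{\Delta}(n_1L/N)$ via Lemma~\ref{lem:Fourier_sum}. The only cosmetic difference is that the paper applies Proposition~\ref{prop:MainProp} with $\epsilon/2$ so that the $\log N$ is absorbed directly, whereas you absorb it at the end --- these are equivalent.
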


\begin{proof}
We use the bound $\widehat{\rho}\ll1$ and Lemma \ref{lem:RestrictedSum}
to conclude that
\[
V_{N}\left(L\right)\ll\frac{L^{2}}{N^{4}}\sum_{\substack{0\ne\left|n_{1}\right|\le N^{4}\\
1\le x_{1}\ne y_{1}\le N
}
}\widehat{\Delta}\left(\frac{n_{1}L}{N}\right)\sum_{\substack{0\ne\left|n_{2}\right|\le N^{4}\\
1\le x_{2}\ne y_{2}\le N\\
\max\left\{ x_{1},x_{2},y_{1},y_{2}\right\} >N^{1/4}\\
\left|n_{1}\left(a_{x_{1}}-a_{y_{1}}\right)-n_{2}\left(a_{x_{2}}-a_{y_{2}}\right)\right|\le N^{\epsilon/2}
}
}\widehat{\Delta}\left(\frac{n_{2}L}{N}\right)+N^{-1}.
\]
By symmetry we can assume that $y_{1}<x_{1},$ $x_{2}\le x_{1}$ and
$y_{2}<x_{2}$, so that by the bound $\widehat{\Delta}\ll1$ and by
Proposition \ref{prop:MainProp}, the inner summation is $O\left(N^{\epsilon/2}\log N\right)$.
Hence, Lemma \ref{lem:Fourier_sum} gives the required bound (\ref{eq:VarBound}).
\end{proof}

\section{Proofs of Theorems \ref{thm:HighProbThm} and \ref{thm:AlmostSureTheorem}}

We are now ready to prove Theorem \ref{thm:HighProbThm}.
\begin{proof}[Proof of Theorem \ref{thm:HighProbThm} ]
By (\ref{eq:SigNtoRN}) and (\ref{eq:RNtoTN}) we have
\[
\frac{\Sigma_{N}^{2}\left(L,\alpha\right)-L}{L}=T_{N}\left(L,\alpha\right)-\frac{L}{N}.
\]
Hence, for sufficiently large $N$ we have
\begin{align*}
\text{meas}\left\{ \text{\ensuremath{\alpha\in I:}}\left|\Sigma_{N}^{2}\left(L,\alpha\right)-L\right|>\delta L\right\}  & =\text{meas}\left\{ \text{\ensuremath{\alpha\in I:}}\left|T_{N}\left(L,\alpha\right)-\frac{L}{N}\right|>\delta\right\} \\
 & \le\text{meas}\left\{ \text{\ensuremath{\alpha\in I:}}\left|T_{N}\left(L,\alpha\right)\right|>\delta/2\right\} .
\end{align*}
Fix a smooth, compactly supported, weight function $\rho\in C_{c}^{\infty}\left(\mathbb{R}\right)$
such that $1_{I}\left(x\right)\le\rho\left(x\right)$. By Chebyshev's
inequality we conclude that for sufficiently large $N$ we have
\begin{align}
\text{meas}\left\{ \text{\ensuremath{\alpha\in I:}}\left|\Sigma_{N}^{2}\left(L,\alpha\right)-L\right|>\delta L\right\}  & \le\frac{4\int_{I}\left|T_{N}\left(L,\alpha\right)\right|^{2}\,d\alpha}{\delta^{2}}\le\frac{4V_{N}\left(L\right)}{\delta^{2}}\label{eq:HighProbEst}\\
 & \ll_{\delta,\epsilon,I}LN^{-1+\epsilon/2}\ll N^{-\epsilon/2}\nonumber 
\end{align}
where we used (\ref{eq:VarBound}) with $\epsilon/2$.
\end{proof}
We will now turn to prove Theorem \ref{thm:AlmostSureTheorem}, that
is, we will show that (\ref{eq:NumberVarianceAsymp}) (or equivalently
(\ref{eq:PairCorrelationAsymp})) holds for almost all $\alpha\in\mathbb{R}$.
It sufficient to prove this for $\alpha\in I$ where $I$ is a bounded
interval. We first show that almost sure convergence of the pair correlation
holds along a subsequence.
\begin{lem}
\label{lem:SubsequenceLemma}Let I be a bounded interval, and let
$\epsilon>0$. Assume that $L=L\left(N\right)=O\left(N^{1/2-\epsilon}\right)$
as $N\to\infty$. Let $N_{m}=m^{2}$, and denote $L_{m}=L\left(N_{m}\right)$.
Then for almost all $\alpha\in I$, we have
\begin{equation}
R_{N_{m}}^{2}\left(L_{m},\alpha,\Delta\right)=L_{m}+o\left(1\right)\label{eq:SubSequenceAsymp}
\end{equation}
as $m\to\infty$.
\end{lem}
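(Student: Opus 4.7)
The plan is to combine the identity (\ref{eq:RNtoTN}) with the second-moment bound (\ref{eq:VarBound}) and a Borel--Cantelli / monotone convergence argument along the sparse subsequence $N_m = m^2$.

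First I would reduce the pair-correlation asymptotic to decay of $T_{N_m}(L_m,\alpha)$. Combining (\ref{eq:RNtoTN}) with the definition of $R_N^2$ yields
$$R_{N_m}^2(L_m,\alpha,\Delta) - L_m = T_{N_m}(L_m,\alpha) - \frac{L_m}{N_m},$$
and since $L_m / N_m \to 0$ under the hypothesis $L_m = O(N_m^{1/2-\epsilon})$, it is enough to prove that $T_{N_m}(L_m,\alpha) \to 0$ for almost every $\alpha$ in the bounded interval $I$.

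Next, I would fix a non-negative $\rho \in C_c^{\infty}(\mathbb{R})$ with $\rho \ge 1_I$, so that $\int_I |T_{N_m}(L_m,\alpha)|^2\, d\alpha \le V_{N_m}(L_m)$. Applying (\ref{eq:VarBound}) with the auxiliary exponent $\epsilon/2$ (where $\epsilon$ is the constant from the hypothesis on $L$) produces
$$V_{N_m}(L_m) \ll L_m\, N_m^{-1 + \epsilon/2} \ll N_m^{-1/2 - \epsilon/2} = m^{-1 - \epsilon},$$
which is summable in $m$. By monotone convergence,
$$\sum_{m=1}^{\infty} |T_{N_m}(L_m,\alpha)|^2 < \infty \qquad \text{for a.e. } \alpha \in I,$$
and in particular $T_{N_m}(L_m,\alpha) \to 0$ for a.e.\ $\alpha \in I$, which gives (\ref{eq:SubSequenceAsymp}).

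I do not expect any genuine obstacle: the subsequence $N_m = m^2$ is chosen precisely so that the pointwise variance bound (\ref{eq:VarBound}) becomes summable, and all the hard analytic work — the truncation carried out in Lemma \ref{lem:RestrictedSum} together with the counting estimate in Proposition \ref{prop:MainProp} — is already packaged into (\ref{eq:VarBound}). The only delicate point is bookkeeping of exponents, but the hypothesis $L = O(N^{1/2-\epsilon})$ leaves a full $\epsilon$ of slack to absorb the $N^{\epsilon/2}$ loss in (\ref{eq:VarBound}) while retaining a summable tail; this slack is precisely what dictates the exponent in the statement of the lemma and the density of the subsequence.
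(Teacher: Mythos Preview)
Your proposal is correct and follows essentially the same route as the paper: reduce to $T_{N_m}\to 0$ via (\ref{eq:RNtoTN}), dominate $\int_I |T_{N_m}|^2$ by $V_{N_m}(L_m)$, and use (\ref{eq:VarBound}) with exponent $\epsilon/2$ together with $L=O(N^{1/2-\epsilon})$ to get a bound $\ll m^{-1-\epsilon}$ summable in $m$. The only cosmetic difference is that the paper passes through Chebyshev and Borel--Cantelli for each $\delta>0$, whereas you use monotone convergence to deduce $\sum_m |T_{N_m}|^2<\infty$ a.e.\ directly; these are equivalent standard devices.
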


\begin{proof}
Applying (\ref{eq:VarBound}) as in (\ref{eq:HighProbEst}), for every
$\delta>0$ and $N$ sufficiently large we have 
\begin{align*}
\text{meas}\left\{ \text{\ensuremath{\alpha\in I:}}\left|R_{N}^{2}\left(L,\alpha,\Delta\right)-L\right|>\delta\right\}  & \le\text{meas}\left\{ \text{\ensuremath{\alpha\in I:}}\left|T_{N}\left(L,\alpha\right)\right|>\delta/2\right\} \\
 & \ll_{\delta,\epsilon,I}LN^{-1+\epsilon/2}\ll N^{-1/2-\epsilon/2}.
\end{align*}
so that
\begin{equation}
\text{meas}\left\{ \text{\ensuremath{\alpha\in I:}}\left|R_{N_{m}}^{2}\left(L_{m},\alpha,\Delta\right)-L_{m}\right|>\delta\right\} \ll_{\delta,\epsilon,I}m^{-1-\epsilon};\label{eq:BorelCantelliBound}
\end{equation}
the asymptotics (\ref{eq:SubSequenceAsymp}) thus holds for almost
all $\alpha\in I$ by the Borel-Cantelli lemma.
\end{proof}
We now have all that is needed to prove Theorem \ref{thm:AlmostSureTheorem}.
\begin{proof}[Proof of Theorem \ref{thm:AlmostSureTheorem}]
It is sufficient to show that 
\begin{equation}
R_{N}^{2}\left(L,\alpha,\Delta\right)=L+o\left(1\right)\label{eq:PairCorrelationAsympProof}
\end{equation}
as $N\to\infty$ for almost all $\alpha\in I$. Let $N_{m}=m^{2}$.
For any $N$ there exists $m$ such that $N_{m-1}\le N<N_{m}$. Moreover,
$\frac{N_{m}}{N}=1+O\left(m^{-1}\right)$, and by the assumption 
\[
L\left(N+1\right)-L\left(N\right)=o\left(N^{-1/2}\right)
\]
we have
\[
L\left(N\right)=L_{m}+o\left(\frac{N-N_{m}}{\sqrt{N}}\right)=L_{m}+o\left(1\right);
\]
thus,  there exists a constant $C>0$ such that for any $\delta>0$,
for sufficiently large $N$ we have 
\begin{align*}
R_{N}^{2}\left(L,\alpha,\Delta\right) & \le\frac{N_{m}}{N}R_{N_{m}}^{2}\left(L\cdot\frac{N_{m}}{N},\alpha,\Delta\right)\\
 & \le\left(1+Cm^{-1}\right)R_{N_{m}}^{2}\left(\left(L_{m}+\delta\right)\cdot\left(1+Cm^{-1}\right),\alpha,\Delta\right);
\end{align*}
by applying Lemma \ref{lem:SubsequenceLemma} with $\left(L+\delta\right)\cdot\left(1+CN^{-1/2}\right)$
instead of $L$, as $m\to\infty$ we have 
\begin{align*}
R_{N_{m}}^{2}\left(\left(L_{m}+\delta\right)\cdot\left(1+Cm^{-1}\right),\alpha,\Delta\right) & =\left(L_{m}+\delta\right)\cdot\left(1+Cm^{-1}\right)+o\left(1\right)\\
 & =L_{m}+\delta+o\left(1\right)
\end{align*}
for all $\alpha\in I_{\delta}$, where $I_{\delta}$ is a full measure
set in $I$ (note that $L_{m}=o\left(m\right)$ by the assumption
$L=O\left(N^{1/2-\epsilon}\right)$). Hence, for sufficiently large
$N$ we have
\begin{equation}
R_{N}^{2}\left(L,\alpha,\Delta\right)\le\left(1+Cm^{-1}\right)\left(L_{m}+\delta+o\left(1\right)\right)=L_{m}+\delta+o\left(1\right)=L+\delta+o\left(1\right)\label{eq:PairCorrelationUpperBound}
\end{equation}
for all $\alpha\in I_{\delta}$. Symmetrically, 
\begin{equation}
R_{N}^{2}\left(L,\alpha,\Delta\right)\ge L-\delta-o\left(1\right)\label{eq:PairCorrelationLowerBound}
\end{equation}
for $\alpha$ in a full measure set in $I$ (depending on $\delta$);
since $\delta>0$ can be taken arbitrarily small along a countable
sequence of values, and a countable intersection of full measure sets
is still of full measure, the bounds (\ref{eq:PairCorrelationUpperBound})
and (\ref{eq:PairCorrelationLowerBound}) imply (\ref{eq:PairCorrelationAsympProof})
for almost all $\alpha\in I$.
\end{proof}
\begin{rem*}
The faster $L$ grows, the sparser the subsequence $N_{m}$ one has
to take in order to apply the Borel-Cantelli lemma in the proof of
Lemma \ref{lem:SubsequenceLemma}. On the other hand, since we require
the condition $L_{m}=o\left(m\right),$ the subsequence $N_{m}$ cannot
be too sparse. For example, if $L=N^{s},$ and $N_{m}=\lfloor m^{t}\rfloor$,
one needs $t>\frac{1}{1-s}$ for (\ref{eq:BorelCantelliBound}) to
hold, but also $t<\frac{1}{s}$, so that $s<1/2$. This explains why
the above argument only works for $L$ growing slower than $N^{1/2}.$ 
\end{rem*}

\section{Higher order correlations -- proof of Theorem \ref{thm:CLT}\label{sec:CLT}}

Taking expectations w.r.t $x$, for all $k\ge2$, we have
\begin{align}
\mathbb{E}\left[\left(S_{N}\left(L,\alpha\right)\right)^{k}\right] & =\sum_{j_{1},\dots,j_{k}=1}^{N}\sum_{n_{1},\dots,n_{k}\in\mathbb{Z}}\int_{0}^{1}\prod_{i=1}^{k}\chi\left(\frac{\alpha a_{j_{i}}-x+n_{i}}{L/N}\right)\,dx\label{eq:kth_moment}\\
 & =\sum_{j_{1},\dots,j_{k}=1}^{N}\sum_{n_{1},\dots,n_{k-1}\in\mathbb{Z}}\int_{\mathbb{R}}\prod_{i=1}^{k-1}\chi\left(\frac{\alpha a_{j_{i}}-x+n_{i}}{L/N}\right)\chi\left(\frac{\alpha a_{j_{k}}-x}{L/N}\right)\,dx\nonumber \\
 & =\frac{L}{N}\sum_{j_{1},\dots,j_{k}=1}^{N}\sum_{n_{1},\dots,n_{k-1}\in\mathbb{Z}}\Delta\left(\frac{\alpha a_{j_{1}}-\alpha a_{j_{k}}+n_{1}}{L/N},\dots,\frac{\alpha a_{j_{k-1}}-\alpha a_{j_{k}}+n_{k-1}}{L/N}\right),\nonumber 
\end{align}
where
\begin{align*}
\Delta\left(t_{1},\dots,t_{k-1}\right) & =\int_{\mathbb{R}}\prod_{i=1}^{k-1}\chi\left(t_{i}-x\right)\chi\left(x\right)\,dx\\
 & =\max\left\{ 1-\text{\ensuremath{\max\{0,t_{1},\dots,t_{k-1}\}+\min\left\{ 0,t_{1},\dots,t_{k-1}\right\} }},0\right\} ;
\end{align*}
we have (see \cite[Lemma 13]{Hauke-Zafeiropoulos}) 
\begin{equation}
\int_{\mathbb{R}^{k-1}}\Delta\left(t_{1},\dots,t_{k-1}\right)\,dt_{1}\dots dt_{k-1}=1.\label{eq:UnitIntegral}
\end{equation}

For $0\le j\le k,$ denote by $\begin{Bmatrix}k\\
j
\end{Bmatrix}$ the Stirling number of the second kind, i.e., the number of ways
to partition a set of $k$ elements into $j$ non-empty subsets. We
partition the sum over $j_{1},\dots,j_{k}$ on the right-hand side
of (\ref{eq:kth_moment}) into sums with $j$ distinct indices. The
term corresponding to $j=1$ is clearly equal to $L$. Recalling the
definition (\ref{eq:k-level_corr}) of the $j$-level correlation
functions $R_{N}^{j}\left(L,\alpha,\Delta\right)$, we then have
\[
\mathbb{E}\left[\left(S_{N}\left(L,\alpha\right)\right)^{k}\right]=L+L\sum_{j=2}^{k}\begin{Bmatrix}k\\
j
\end{Bmatrix}R_{N}^{j}\left(L,\alpha,\Delta\right).
\]
In view of Lemma \ref{lem:PoissonToNormal}, Theorem \ref{thm:CLT}
will be a direct consequence of the following proposition.
\begin{prop}
\label{prop:MainCLTProp}Let $L=L\left(N\right)$ such that for all
$\eta>0$ we have $L=O\left(N^{\eta}\right)$, and assume that there
exists $\epsilon>0$ such that $L\left(N+1\right)-L\left(N\right)=O\left(N^{-\epsilon}\right)$.
Then for almost all $\alpha\in\mathbb{R}$, we have
\begin{equation}
R_{N}^{j}\left(L,\alpha,\Delta\right)=L^{j-1}+O\left(L^{-s}\right)\label{eq:corr_asymp}
\end{equation}
for all $j\ge2$ and all $s>0$.
\end{prop}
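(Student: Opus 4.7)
The plan is to parallel the Poisson-summation and second-moment strategy of Section~2, with a combinatorial input that generalizes Proposition \ref{prop:MainProp} to $j$-level correlations, followed by a Borel--Cantelli argument along a sparse subsequence and an interpolation to all $N$ in the spirit of the proof of Theorem \ref{thm:AlmostSureTheorem}. Applying Poisson summation in the variables $n_1,\dots,n_{j-1}$ in (\ref{eq:k-level_corr}) gives
$$R_N^j(L,\alpha,\Delta) = \frac{L^{j-1}}{N^j}\sum_{\substack{j_1,\dots,j_j=1\\ \mathrm{distinct}}}^{N} \sum_{m\in\mathbb{Z}^{j-1}}\widehat{\Delta}\!\left(\frac{mL}{N}\right) e\!\left(\alpha\sum_{i=1}^{j-1} m_i(a_{j_i}-a_{j_j})\right),$$
where $\widehat{\Delta}$ is the multivariate Fourier transform. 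Since $\widehat{\Delta}(0)=\int\Delta=1$ by (\ref{eq:UnitIntegral}), the $m=0$ mode contributes $L^{j-1}\cdot N(N-1)\cdots(N-j+1)/N^j = L^{j-1}+O(L^{j-1}/N)$; denote the nonzero-frequency remainder by $E_N^j(L,\alpha)$.

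For a bounded interval $I$ and a weight $\rho\in C_c^\infty(\mathbb{R})$ with $\rho\ge 1_I$, expanding the square yields
$$V_N^j(L):=\int|E_N^j(L,\alpha)|^2\rho(\alpha)\,d\alpha = \frac{L^{2(j-1)}}{N^{2j}}\sum_{\vec j,\vec j',\,m,m'\ne 0}\widehat{\Delta}\!\left(\frac{mL}{N}\right)\widehat{\Delta}\!\left(\frac{m'L}{N}\right)\widehat{\rho}(\Phi),$$
where $\Phi=\sum_i m_i(a_{j_i}-a_{j_j})-\sum_i m_i'(a_{j_i'}-a_{j_j'})$ is an integer linear combination of at most $2j$ distinct $a_n$'s with integer coefficients. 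As in Lemma \ref{lem:RestrictedSum} one truncates to $|m_i|,|m_i'|\le N^A$ and $|\Phi|\le N^\eta$ at negligible cost; the crucial step is a multidimensional analog of Proposition \ref{prop:MainProp}, which I expect to yield $V_N^j(L)\ll_{\eta,I} N^{-1+\eta}$, with all powers of $L$ absorbed because $L=N^{o(1)}$.

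By Chebyshev's inequality, for any $s>0$,
$$\mathrm{meas}\{\alpha\in I:|E_N^j(L,\alpha)|>L^{-s}\}\le L^{2s}V_N^j(L)\ll N^{-1+2\eta},$$
which is summable along any subsequence $N_m=\lfloor m^A\rfloor$ with $A>1/(1-2\eta)$. Borel--Cantelli then gives $E_{N_m}^j(L_m,\alpha)=O(L_m^{-s})$ for almost every $\alpha\in I$, simultaneously for all $j\ge 2$ and $s\in\mathbb{Q}_{>0}$ by countable intersection. To interpolate to all $N$, observe that $\Delta(\vec t/\lambda)=\max(1-h(\vec t)/\lambda,0)$, where $h(\vec t)=\max\{0,\vec t\}-\min\{0,\vec t\}\ge 0$, is nondecreasing in $\lambda>0$, so $R_N^j(L,\alpha,\Delta)$ is nondecreasing in $L$. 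For $N_{m-1}\le N<N_m$, sandwich $R_N^j$ between $R_{N_m}^j$ and $R_{N_{m-1}}^j$ at slightly perturbed $L$; using $L(N+1)-L(N)=O(N^{-\epsilon})$ together with $N_m/N=1+O(m^{-1})$ (taking $A<1/(1-\epsilon)$, compatible with the Borel--Cantelli constraint), the sandwich errors are $o(1)$ and may be absorbed into $O(L^{-s})$, transferring the estimate to all $N$.

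The main obstacle is the counting generalization of Proposition \ref{prop:MainProp} used in the second step. In the pair case one balances a single equation $n_1(a_{x_1}-a_{y_1})=n_2(a_{x_2}-a_{y_2})+O(N^\eta)$ with four indices and two integer coefficients; for $j$-level correlations one must handle an analogous equation with up to $2j$ lacunary values and coefficients of polynomial size in $N$. The same dominant-term peeling strategy should apply — fix the maximal index and its coefficient, exploit lacunarity to force the other non-negligible indices to cluster within an $O(\log N)$ window of it, pin down their coefficients up to $O(N^\eta)$ choices, and iterate — but the combinatorial bookkeeping becomes delicate and must track accumulating logarithmic and $N^\eta$ losses carefully so as not to exhaust the $N^{-1}$ saving needed for Borel--Cantelli summability.
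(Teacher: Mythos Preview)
Your overall architecture---Poisson summation, an $L^2$-bound via a lacunary counting lemma, Borel--Cantelli along a polynomial subsequence, then monotone interpolation---is exactly the route the paper takes. Two remarks are worth making.

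First, the ``main obstacle'' you flag, the $j$-level generalisation of Proposition~\ref{prop:MainProp}, is not reproved in the paper: it is quoted from \cite[Proposition~2]{ChaubeyYesha}, which gives $O(N^{2k-1+4k\epsilon})$ solutions to $|n\cdot\Delta_{(a_n)}(x)-m\cdot\Delta_{(a_n)}(y)|\le N^{\epsilon}$ in the relevant ranges. So this step is available off the shelf, and your sketched peeling argument is essentially how that result is proved.

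Second, there is a genuine technical gap in your ``truncate as in Lemma~\ref{lem:RestrictedSum} at negligible cost'' step when $j\ge 3$. The multivariate transform is $\widehat{\Delta}(\xi)=\mathrm{sinc}(\xi_1+\cdots+\xi_{j-1})\prod_{i}\mathrm{sinc}(\xi_i)$, which for $j\ge 3$ is not nonnegative and decays only like $|\xi_i|^{-1}$ in each direction; in particular the analogue of Lemma~\ref{lem:Fourier_sum} with absolute values fails, and the tail/diagonal estimates of Lemma~\ref{lem:RestrictedSum} do not go through verbatim. The paper sidesteps this entirely by working not with $\Delta$ but with smooth $\psi_{\pm}\in C_c^{\infty}(\mathbb{R}^{j-1})$ satisfying $\psi_-\le\Delta\le\psi_+$ and $\|\Delta-\psi_{\pm}\|_\infty\ll\delta$, $\|\psi_{\pm}\|_{r,1}\ll\delta^{-r}$. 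The variance bound then reads $V\ll\|\psi\|_{r,1}^2 N^{-1+\eta}$ (Lemma~\ref{lem:EffectiveVar}), and choosing $\delta=N^{-\epsilon/(8r)}$ balances the loss $\delta^{-2r}=N^{\epsilon/4}$ against the $N^{-1}$ gain; this is precisely where the hypothesis $L=N^{o(1)}$ is used, since it absorbs both $L^{2(j-1)}$ and $L^{2s}$. The unsmoothing is then a simple squeeze $R_N^j(\psi_-)\le R_N^j(\Delta)\le R_N^j(\psi_+)$ along the subsequence. Your direct approach may well be salvageable with a more careful treatment of the multivariate sinc sums, but as written the truncation claim is not justified, and the smoothing device is the cleaner fix.
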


We apply the following strategy for proving Proposition \ref{prop:MainCLTProp}.
We first prove an analogous result with a smooth test function along
a subsequence. We then unsmooth along the subsequence, and finally
deduce the result along the full sequence. We would like to use the
results of \cite{ChaubeyYesha}, and for that it would be more convenient
to work with a ``transformed'' correlation function: for a smooth,
compactly supported function $\psi:\mathbb{R}^{k-1}\to\mathbb{R}$
and for $k\ge2$ we denote the smoothed $k$-level correlation function
\[
R_{N}^{k}\left(L,\alpha,\psi\right)=\frac{1}{N}\sum_{\substack{j_{1},\dots,j_{k}=1\\
\mathrm{distinct}
}
}^{N}\sum_{n_{1},\dots,n_{k-1}\in\mathbb{Z}}\psi\left(\frac{\alpha a_{j_{1}}-\alpha a_{j_{k}}+n_{1}}{L/N},\dots,\frac{\alpha a_{j_{k-1}}-\alpha a_{j_{k}}+n_{k-1}}{L/N}\right)
\]
and the transformed smoothed $k$-level correlation function
\[
\tilde{R}_{N}^{k}\left(\alpha,\psi\right)=\frac{1}{N}\sum_{\substack{j_{1},\dots,j_{k}=1\\
\mathrm{distinct}
}
}^{N}\sum_{n_{1},\dots,n_{k-1}\in\mathbb{Z}}\psi\left(N\left(\alpha a_{j_{1}}-\alpha a_{j_{2}}+n_{1}\right),\dots,N\left(\alpha a_{j_{k-1}}-\alpha a_{j_{k}}+n_{k-1}\right)\right).
\]
Then, for sufficiently large $N$ (recall that $L=o\left(N\right))$
we have
\begin{equation}
R_{N}^{k}\left(L,\alpha,\psi\right)=\tilde{R}_{N}^{k}\left(\alpha,\tilde{\psi}_{L}\right)\label{eq:correlation_transformation}
\end{equation}
where
\[
\tilde{\psi}_{L}\left(t_{1},\dots,t_{k-1}\right)=\psi\left(\frac{t_{1}+\dots+t_{k-1}}{L},\frac{t_{2}+\dots+t_{k-1}}{L},\dots,\frac{t_{k-1}}{L}\right).
\]
For the transformed correlation function we have the following $L^{2}-$norm
estimate: let $I$ be a bounded interval, and let
\[
V\left(\tilde{R}_{N}^{k}\left(\psi\right)\right)=\int_{I}\left(\tilde{R}_{N}^{k}\left(\alpha,\psi\right)-C_{k}\left(N\right)\widehat{\psi}\left(0\right)\right)^{2}\,d\alpha,
\]
where
\[
C_{k}\left(N\right)=\left(1-\frac{1}{N}\right)\cdots\left(1-\frac{k-1}{N}\right).
\]

\begin{lem}
\label{lem:EffectiveVar}Let $k\ge2$. For each $\eta>0$ there exists
$r=r\left(\eta\right)$ such that
\begin{equation}
V\left(\tilde{R}_{N}^{k}\left(\psi\right)\right)=O\left(\left\Vert \psi\right\Vert _{r,1}^{2}N^{-1+\eta}\right)\label{eq:VarianceBoundRk}
\end{equation}
where $\left\Vert \psi\right\Vert _{r,1}=\sum\limits _{\left|\alpha\right|\le r}\left\Vert \partial^{\alpha}\psi\right\Vert _{1}$.
\end{lem}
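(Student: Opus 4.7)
The plan is to mirror the strategy used in Section~2 for $V_N(L)$, now in the $k$-level transformed setting. I would first apply Poisson summation separately in each of the $k-1$ variables $n_i$, rewriting
\[
\tilde{R}_N^k(\alpha,\psi)=\frac{1}{N^k}\sum_{\substack{j_1,\dots,j_k=1\\ \mathrm{distinct}}}^{N}\sum_{m\in\mathbb{Z}^{k-1}}\widehat{\psi}(m/N)\,e\!\left(\alpha\sum_{i=1}^{k-1}m_i(a_{j_i}-a_{j_{i+1}})\right).
\]
The $m=0$ contribution is exactly $C_k(N)\widehat{\psi}(0)$, so after subtracting this main term, squaring, and integrating against a smooth compactly supported weight $\rho\ge 1_I$, the variance $V(\tilde{R}_N^k(\psi))$ is bounded by
\[
\frac{1}{N^{2k}}\sum_{(\mathbf{j},m)}\sum_{(\mathbf{j}',m')}\widehat{\psi}(m/N)\overline{\widehat{\psi}(m'/N)}\,\widehat{\rho}(\Phi_{\mathbf{j},m}-\Phi_{\mathbf{j}',m'}),
\]
where $\Phi_{\mathbf{j},m}=\sum_{i=1}^{k-1}m_i(a_{j_i}-a_{j_{i+1}})$ and the sums range over distinct $\mathbf{j},\mathbf{j}'$ and nonzero $m,m'$.

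Next I would truncate using the Schwartz-type decay $|\widehat{\psi}(m/N)|\ll \|\psi\|_{r,1}(1+|m|/N)^{-r}$, which is precisely how the norm $\|\psi\|_{r,1}$ enters. Choosing $r=r(\eta)$ sufficiently large permits me to restrict to $\|m\|_\infty,\|m'\|_\infty\le N^A$ for a suitable $A=A(\eta,k)$ at negligible cost, exactly as in Lemma~\ref{lem:RestrictedSum}; the rapid decay of $\widehat{\rho}$ then further confines attention to tuples with $|\Phi_{\mathbf{j},m}-\Phi_{\mathbf{j}',m'}|\le N^\eta$. Tuples with $\max_i j_i,\max_i j'_i\le N^{1/4}$ are handled by a trivial count that is easily dominated by the target bound.

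The crux of the argument is a $k$-level analogue of Proposition~\ref{prop:MainProp}: for each fixed $(\mathbf{j},m)$ with $\max_i j_i>N^{1/4}$ and $\|m\|_\infty\le N^A$, the number of $(\mathbf{j}',m')$ with $\|m'\|_\infty\le N^A$ and $|\Phi_{\mathbf{j},m}-\Phi_{\mathbf{j}',m'}|\le N^\eta$ should be $O(N^{O(\eta)}(\log N)^{O_k(1)})$. The proof would proceed iteratively, exploiting lacunarity: the largest term $a_{j_\ell}$ appearing on the right must match the largest on the left up to a factor of size $C^{O(\log N)}$, pinning down one index $j'_\ell$ to $O(\log N)$ possibilities and then determining the coefficient $m'_\ell$ essentially uniquely; peeling off this top layer reduces the problem to the same question at level $k-1$, and one iterates. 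Combining this count with the crude total mass $\sum_{(\mathbf{j},m)}|\widehat{\psi}(m/N)|\ll \|\psi\|_{r,1}N^{k+O(\eta)}$ yields $V(\tilde{R}_N^k(\psi))\ll \|\psi\|_{r,1}^2 N^{-1+\eta}$ after readjusting $\eta$. The main obstacle I anticipate is making the iterative peel-off rigorous when several indices in $\mathbf{j}$ are nearly coincident, or when the signs and sizes of the $m_i$ produce cancellation that reduces the effective size of $\Phi_{\mathbf{j},m}$; these degenerate configurations would have to be split into finitely many "shapes" and handled separately, reducing each case to a nondegenerate skeleton on which the basic lacunary counting applies.
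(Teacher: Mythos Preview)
Your approach matches the paper's almost exactly: Poisson summation, subtraction of the $m=0$ main term, expansion of the square against a smooth $\rho\ge 1_I$, truncation of the frequency variables via $|\widehat\psi(m/N)|\ll\|\psi\|_{r,1}\|m/N\|_\infty^{-r}$, localization to $|\Phi_{\mathbf j,m}-\Phi_{\mathbf j',m'}|\le N^\epsilon$ via the decay of $\widehat\rho$, and a lacunary counting lemma. The only difference is that the paper does not carry out the iterative peel-off you sketch but simply invokes \cite[Proposition~2]{ChaubeyYesha}, which states directly that the total number of admissible tuples $(n,m,x,y)$ with $\|n\|_\infty,\|m\|_\infty\le N^{1+\epsilon}$ is $O(N^{2k-1+4k\epsilon})$; your inductive argument is precisely the natural proof of that proposition. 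One bookkeeping slip to fix: your ``crude total mass'' should read $\sum_{(\mathbf j,m)}|\widehat\psi(m/N)|\ll\|\psi\|_{r,1}N^{2k-1}$ (there are $\sim N^k$ choices of $\mathbf j$ and $\sum_{m\in\mathbb Z^{k-1}}|\widehat\psi(m/N)|\ll\|\psi\|_{r,1}N^{k-1}$), not $N^{k+O(\eta)}$; combined with the $N^{-2k}$ prefactor this still yields the claimed $\|\psi\|_{r,1}^2N^{-1+\eta}$.
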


\begin{proof}
For a smooth, compactly supported, non-negative weight function $\rho\in C_{c}^{\infty}\left(\mathbb{R}\right),\,\rho\ge0$,
denote
\[
V\left(\tilde{R}_{N}^{k}\left(\psi\right),\rho\right)=\int_{\mathbb{R}}\left(\tilde{R}_{N}^{k}\left(\alpha,\psi\right)-C_{k}\left(N\right)\widehat{\psi}\left(0\right)\right)^{2}\rho\left(\alpha\right)\,d\alpha.
\]
Then Proposition 7 in \cite{ChaubeyYesha} implies that for each $\eta>0$
there exists $r=r\left(\eta\right)$ such that
\begin{equation}
V\left(\tilde{R}_{N}^{k}\left(\psi\right),\rho\right)=O\left(\left\Vert \psi\right\Vert _{r,1}^{2}N^{-1+\eta}\right);\label{eq:VarianceEffectiveBound}
\end{equation}
while the constant $\left\Vert \psi\right\Vert _{r,1}^{2}$ is not
explicitly stated there, it follows from the proof, which we now sketch
(for the full details we refer the reader to \cite{ChaubeyYesha}):
for $x=\left(x_{1},\dots,x_{k}\right),$ denote
\[
\Delta_{\left(a_{n}\right)}\left(x\right)=\left(a_{x_{1}}-a_{x_{2}},\dots,a_{x_{k-1}}-a_{x_{k}}\right);
\]
by the Poisson summation formula, we have
\[
\tilde{R}_{N}^{k}\left(\alpha,\psi\right)=C_{k}\left(N\right)\widehat{\psi}\left(0\right)+\frac{1}{N^{k}}\sum_{0\ne n\in\mathbb{Z}^{k-1}}\widehat{\psi}\left(\frac{n}{N}\right)\sum_{\substack{x=\left(x_{1},\dots,x_{k}\right)\\
1\le x_{1},\dots,x_{k}\le N\,\mathrm{distinct}
}
}e\left(\alpha n\cdot\Delta_{\left(a_{n}\right)}\left(x\right)\right),
\]
and hence
\[
V\left(\tilde{R}_{N}^{k}\left(\psi\right),\rho\right)=\frac{1}{N^{2k}}\sum_{0\ne n,m\in\mathbb{Z}^{k-1}}\widehat{\psi}\left(\frac{n}{N}\right)\overline{\widehat{\psi}\left(\frac{m}{N}\right)}\sum^{*}\widehat{\rho}\left(n\cdot\Delta_{\left(a_{n}\right)}\left(x\right)-m\cdot\Delta_{\left(a_{n}\right)}\left(y\right)\right).
\]
where the range of the summation $\sum\limits ^{*}$ is over $x=\left(x_{1},\dots,x_{k}\right)$
where $1\le x_{1},\dots,x_{k}\le N$ are distinct, and $y=\left(y_{1},\dots,y_{k}\right)$
where $1\le y_{1},\dots,y_{k}\le N$ are distinct. Fix $\epsilon>0$;
by splitting the summation over $n,m$ into different ranges and using
the bounds $\widehat{\rho}\ll1$, $\left|\widehat{\psi}\right|\le\left\Vert \psi\right\Vert _{1}\le\left\Vert \psi\right\Vert _{r,1}$
and $\left|\widehat{\psi}\right|\ll\left\Vert \psi\right\Vert _{r,1}\left\Vert x\right\Vert _{\infty}^{-r}$
(for arbitrarily large $r$), we obtain
\begin{align*}
V\left(\tilde{R}_{N}^{k}\left(\psi\right),\rho\right) & \ll\left\Vert \psi\right\Vert _{r,1}^{2}\Bigl(\frac{1}{N^{2k}}\sum_{\left\Vert n\right\Vert _{\infty},\left\Vert m\right\Vert _{\infty}>N^{1+\epsilon}}\left\Vert \frac{n}{N}\right\Vert _{\infty}^{-r}\left\Vert \frac{m}{N}\right\Vert _{\infty}^{-r}\sum^{*}1\\
 & +\frac{1}{N^{2k}}\sum_{\left\Vert n\right\Vert _{\infty}>N^{1+\epsilon},0<\left\Vert m\right\Vert _{\infty}\le N^{1+\epsilon}}\left\Vert \frac{n}{N}\right\Vert _{\infty}^{-r}\sum^{*}1\\
 & +\frac{1}{N^{2k}}\sum_{0<\left\Vert n\right\Vert _{\infty},\left\Vert m\right\Vert _{\infty}\le N^{1+\epsilon}}\sum^{*}\left|\widehat{\rho}\left(n\cdot\Delta_{\left(a_{n}\right)}\left(x\right)-m\cdot\Delta_{\left(a_{n}\right)}\left(y\right)\right)\right|\Bigr).
\end{align*}
The contribution of the first two terms is negligible by a trivial
estimate, and so is the contribution of the third term restricted
to the range $\left|n\cdot\Delta_{\left(a_{n}\right)}\left(x\right)-m\cdot\Delta_{\left(a_{n}\right)}\left(y\right)\right|>N^{\epsilon}$
(choosing $r$ sufficiently large depending on $\epsilon$). The rest
of the contribution from the third term is then bounded by \cite[Proposition 2]{ChaubeyYesha}
which states that there are at most $O\left(N^{2k-1+4k\epsilon}\right)$
values of $n,m,x,y$ in the above ranges such that $\left|n\cdot\Delta_{\left(a_{n}\right)}\left(x\right)-m\cdot\Delta_{\left(a_{n}\right)}\left(y\right)\right|\le N^{\epsilon}$,
which gives (\ref{eq:VarianceEffectiveBound}). \\
Finally, if we choose $\rho$ such that $\rho\ge1_{I}$, then
\[
V\left(\tilde{R}_{N}^{k}\left(\psi\right)\right)\le V\left(\tilde{R}_{N}^{k}\left(\psi\right),\rho\right),
\]
and (\ref{eq:VarianceBoundRk}) follows.
\end{proof}
Fix $\eta>0$, and let $r=r\left(\eta\right)>1$ be as in Lemma \ref{lem:EffectiveVar};
let $0<\epsilon<1$, $\delta=N^{-\frac{\epsilon}{8r}}$, and assume
that $\psi\in C_{c}^{\infty}\left(\mathbb{R}^{k-1}\right)$ is a smooth
approximation to $\Delta$, such that $\left\Vert \Delta-\psi\right\Vert _{\infty}\ll\delta$
and such that $\left\Vert \psi\right\Vert _{r,1}\ll\delta^{-r}$.
By (\ref{eq:UnitIntegral}), if $L$ grows slower than any power of
$N$, then
\[
\widehat{\tilde{\psi}_{L}}\left(0\right)=L^{k-1}\widehat{\psi}\left(0\right)=L^{k-1}+O\left(\delta L^{k-1}\right)=L^{k-1}+O\left(N^{-\frac{\epsilon}{8r}+\eta}\right).
\]
 Moreover, we have 
\[
\left\Vert \tilde{\psi}_{L}\right\Vert _{r,1}^{2}\ll L^{2\left(k-1\right)}\left\Vert \psi\right\Vert _{r,1}^{2}\ll L^{2\left(k-1\right)}\delta^{-2r}\ll N^{\frac{\epsilon}{4}+\eta}.
\]

We deduce almost sure convergence along a subsequence.
\begin{lem}
\label{lem:k-corr_subsequence}Let $L=L\left(N\right)$ be such that
for all $\eta>0$ we have $L=O\left(N^{\eta}\right)$ and let $k\ge2$.
Let $0<\epsilon<1$, $N_{m}=\lfloor m^{1+\epsilon}\rfloor$, and denote
$L_{m}=L\left(N_{m}\right)$. Then for almost all $\alpha\in I$,
we have
\begin{equation}
R_{N_{m}}^{k}\left(L_{m},\alpha,\Delta\right)=L_{m}^{k-1}+O\left(L_{m}^{-s}\right)\label{eq:RkAlongSubseq}
\end{equation}
for all $s>0,$ as $m\to\infty$, 
\end{lem}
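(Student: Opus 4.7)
The plan is to follow the strategy sketched in the discussion preceding the lemma: establish the asymptotic first for a smoothed, transformed correlation function along the subsequence via Chebyshev's inequality and Borel--Cantelli, and then unsmooth by a domination argument. The key ingredients will be the identity (\ref{eq:correlation_transformation}), the variance estimate of Lemma \ref{lem:EffectiveVar}, and the bounds on $\widehat{\tilde{\psi}_L}(0)$ and $\|\tilde{\psi}_L\|_{r,1}^2$ already recorded in the text.

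Fix $k \ge 2$ and $s > 0$. For each $m$, let $\psi_m \in C_c^\infty(\mathbb{R}^{k-1})$ be the smooth approximation to $\Delta$ with parameter $\delta_m = N_m^{-\epsilon/(8r)}$ from the text preceding the lemma, constructed so that every $\psi_m$ is supported in a common compact neighbourhood of $\mathrm{supp}(\Delta)$. By (\ref{eq:correlation_transformation}) and Chebyshev's inequality,
\[
\mathrm{meas}\bigl\{\alpha \in I : \bigl|\tilde{R}_{N_m}^k(\alpha, (\tilde{\psi}_m)_{L_m}) - C_k(N_m)\widehat{(\tilde{\psi}_m)_{L_m}}(0)\bigr| > L_m^{-s}\bigr\} \ll L_m^{2s}\, V\bigl(\tilde{R}_{N_m}^k((\tilde{\psi}_m)_{L_m})\bigr).
\]
Combining Lemma \ref{lem:EffectiveVar} with $\|(\tilde{\psi}_m)_{L_m}\|_{r,1}^2 \ll N_m^{\epsilon/4 + \eta}$ and $L_m^{2s} \ll N_m^{\kappa}$ for arbitrarily small $\kappa > 0$, this measure is $\ll N_m^{-1 + \epsilon/4 + 2\eta + \kappa}$. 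With $N_m = \lfloor m^{1+\epsilon}\rfloor$ the exponent in $m$ becomes
\[
-1 - \tfrac{3\epsilon}{4} + \tfrac{\epsilon^{2}}{4} + (1+\epsilon)(2\eta + \kappa),
\]
which is strictly less than $-1$ once $\eta$ and $\kappa$ are small enough in terms of $\epsilon$; the right-hand side is then summable in $m$. Borel--Cantelli yields, for almost every $\alpha \in I$, that $\tilde{R}_{N_m}^k(\alpha, (\tilde{\psi}_m)_{L_m}) = L_m^{k-1} + O(L_m^{-s})$, where the combined error from $C_k(N_m) = 1 + O(1/N_m)$ and $\widehat{(\tilde{\psi}_m)_{L_m}}(0) = L_m^{k-1} + O(N_m^{-\epsilon/(8r)+\eta})$ is absorbed into $O(L_m^{-s})$ using the subpolynomial growth of $L_m$. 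By (\ref{eq:correlation_transformation}), the same asymptotic holds for $R_{N_m}^k(L_m, \alpha, \psi_m)$.

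To unsmooth, fix once and for all a single non-negative $\Phi \in C_c^\infty(\mathbb{R}^{k-1})$ with $\Phi \ge \mathbf{1}_A$, where $A$ is a compact neighbourhood of $\mathrm{supp}(\Delta)$ containing $\mathrm{supp}(\Delta - \psi_m)$ for every $m$. By non-negativity of the summand in $R_N^k(L, \alpha, \cdot)$,
\[
\bigl|R_{N_m}^k(L_m, \alpha, \Delta) - R_{N_m}^k(L_m, \alpha, \psi_m)\bigr| \le \|\Delta - \psi_m\|_\infty\, R_{N_m}^k(L_m, \alpha, \Phi) \ll \delta_m\, R_{N_m}^k(L_m, \alpha, \Phi).
\]
Applying the preceding argument to the fixed smooth function $\Phi$ (for which no further mollification is needed and $\|\Phi\|_{r,1}$ is an absolute constant) yields $R_{N_m}^k(L_m, \alpha, \Phi) = L_m^{k-1}\widehat{\Phi}(0) + O(L_m^{-s}) = O(L_m^{k-1})$ almost surely along the subsequence. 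The unsmoothing error is therefore $\ll \delta_m L_m^{k-1} = N_m^{-\epsilon/(8r)} L_m^{k-1}$, which is $O(L_m^{-s})$ since $L_m^{k-1+s}$ is dominated by any positive power of $N_m$. Taking a countable intersection of full-measure sets over $s \in \mathbb{Q}_{>0}$ and $k \in \mathbb{N}_{\ge 2}$ gives (\ref{eq:RkAlongSubseq}) for almost every $\alpha \in I$.

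The main delicate point is the exponent bookkeeping in the Borel--Cantelli step: the smoothing factor $\|(\tilde{\psi}_m)_{L_m}\|_{r,1}^2 \ll N_m^{\epsilon/4+\eta}$ eats into the variance bound of Lemma \ref{lem:EffectiveVar}, while the unsmoothing error scales like $\delta_m L_m^{k-1+s}$. The hypothesis that $L$ grows slower than any power of $N$ is exactly what makes these two constraints simultaneously satisfiable, and the sparsity $N_m \asymp m^{1+\epsilon}$ is calibrated so the Borel--Cantelli series converges in spite of the $N_m^{\epsilon/4}$ loss from the smoothing.
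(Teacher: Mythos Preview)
Your proof is correct and follows essentially the same approach as the paper: establish the smoothed asymptotic along the subsequence via the variance bound of Lemma~\ref{lem:EffectiveVar}, Chebyshev, and Borel--Cantelli, then unsmooth. The only difference is in the unsmoothing step: the paper sandwiches $\Delta$ between two smooth approximations $\psi_{-}\le\Delta\le\psi_{+}$ and applies the smoothed asymptotic to each, whereas you bound $|\Delta-\psi_m|\le\|\Delta-\psi_m\|_\infty\,\Phi$ with a fixed majorant $\Phi$ and run the argument once more for $\Phi$; both are standard and equivalent here.
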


\begin{proof}
It is sufficient to show that for any fixed $s>0$, (\ref{eq:RkAlongSubseq})
holds for almost all $\alpha\in I$. By identity (\ref{eq:correlation_transformation}),
Lemma \ref{lem:EffectiveVar}, and the upper bound on $L$, for each
$\eta>0$ there exists $r=r\left(\eta\right)$ such that
\begin{gather*}
\int_{I}\left(L_{m}^{s}\left(R_{N_{m}}^{k}\left(L_{m},\alpha,\psi\right)-C_{k}\left(N_{m}\right)\widehat{\tilde{\psi}_{L_{m}}}\left(0\right)\right)\right)^{2}\,d\alpha=L_{m}^{2s}V\left(\tilde{R}_{N_{m}}^{k}\left(\tilde{\psi}_{L_{m}}\right)\right)\\
=O\left(\left\Vert \tilde{\psi}_{L_{m}}\right\Vert _{r,1}^{2}L_{m}^{2s}N_{m}^{-1+\eta}\right)=O\left(N_{m}^{-1+\frac{\epsilon}{4}+3\eta}\right)=O\left(m^{\left(1+\epsilon\right)\left(-1+\frac{\epsilon}{4}+3\eta\right)}\right).
\end{gather*}
Hence, by the Borel-Cantelli lemma, for $\eta$ sufficiently small
we have
\[
L_{m}^{s}\left(R_{N_{m}}^{k}\left(L_{m},\alpha,\psi\right)-C_{k}\left(N_{m}\right)\widehat{\tilde{\psi}_{L_{m}}}\left(0\right)\right)=o\left(1\right)
\]
for almost all $\alpha\in I$, and in particular
\begin{equation}
R_{N_{m}}^{k}\left(L_{m},\alpha,\psi\right)=C_{k}\left(N_{m}\right)\widehat{\tilde{\psi}_{L_{m}}}\left(0\right)+O\left(L_{m}^{-d}\right)=L_{m}^{k-1}+O\left(L_{m}^{-s}\right),\label{eq:Smooth_Subseq_R_k_asymp}
\end{equation}
where we used again the upper bound on $L$.

Let $\psi=\psi_{\pm}$ be approximations to $\Delta$ satisfying the
above assumptions such that $\psi_{-}\le\Delta\le\psi_{+}$; a simple
way to construct such approximations is to convolve the functions
\[
\Delta_{\delta}^{\pm}\left(t_{1},\dots,t_{k-1}\right):=\max\left\{ 1\pm\delta-\text{\ensuremath{\max\{0,t_{1},\dots,t_{k-1}\}+\min\left\{ 0,t_{1},\dots,t_{k-1}\right\} }},0\right\} 
\]
with $\varphi_{\delta/10}\left(t\right)$, where $\varphi_{\varepsilon}\left(t\right)=\varepsilon^{-\left(k-1\right)}\varphi\left(t/\varepsilon\right)$,
and $\varphi\in C_{c}^{\infty}\left(\mathbb{R}^{k-1}\right)$ is the
standard mollifier. We then have
\[
R_{N_{m}}^{k}\left(L_{m},\alpha,\psi^{-}\right)\le R_{N_{m}}^{k}\left(L_{m},\alpha,\Delta\right)\le R_{N_{m}}^{k}\left(L_{m},\alpha,\psi^{+}\right);
\]
substituting the asymptotics (\ref{eq:Smooth_Subseq_R_k_asymp}),
we conclude that (\ref{eq:RkAlongSubseq}) holds for almost all $\alpha\in I$.
\end{proof}
We are now ready to prove Proposition \ref{prop:MainCLTProp}.
\begin{proof}[Proof of Proposition \ref{prop:MainCLTProp}]
 The argument is similar to that of the proof of Theorem \ref{thm:AlmostSureTheorem}.
Let $k\ge2$; it is enough to show that for almost all $\alpha\in I$,
we have
\[
R_{N}^{k}\left(L,\alpha,\Delta\right)=L^{k-1}+O\left(L^{-s}\right)
\]
for all $s>0$. Let $N_{m}=\lfloor m^{1+\epsilon/2}\rfloor$, so that
for any $N$ there exists $m$ such that $N_{m-1}\le N<N_{m}$. Moreover,
$\frac{N_{m}}{N}=1+O\left(m^{-1}\right)$, and by the assumption 
\[
L\left(N+1\right)-L\left(N\right)=O\left(N^{-\epsilon}\right)
\]
we have
\[
L=L_{m}+O\left(\frac{N-N_{m}}{N^{\epsilon}}\right)=L_{m}+O\left(N_{m}^{-\epsilon/2}\right);
\]
hence, there exists a constant $C>0$ such that for sufficiently large
$N$ we have 
\begin{align*}
R_{N}^{k}\left(L,\alpha,\Delta\right) & \le\frac{N_{m}}{N}R_{N_{m}}^{k}\left(L\cdot\frac{N_{m}}{N},\alpha,\Delta\right)\\
 & \le\left(1+Cm^{-1}\right)R_{N_{m}}^{k}\left(\left(L_{m}+CN_{m}^{-\epsilon/2}\right)\cdot\left(1+CN_{m}^{-\frac{1}{1+\epsilon/2}}\right),\alpha,\Delta\right);
\end{align*}
by the upper bound on $L$ and Lemma \ref{lem:k-corr_subsequence}
with $\left(L+CN^{-\epsilon/2}\right)\cdot\left(1+CN^{-\frac{1}{1+\epsilon/2}}\right)$
instead of $L$, for almost all $\alpha\in I$  we have
\begin{gather*}
R_{N_{m}}^{k}\left(\left(L_{m}+CN_{m}^{-\epsilon/2}\right)\cdot\left(1+CN_{m}^{-\frac{1}{1+\epsilon/2}}\right),\alpha,\Delta\right)\\
=\left(L_{m}+CN_{m}^{-\epsilon/2}\right)^{k-1}\cdot\left(1+CN_{m}^{-\frac{1}{1+\epsilon/2}}\right)^{k-1}+O\left(L_{m}^{-s}\right)=L_{m}^{k-1}+O\left(L_{m}^{-s}\right)
\end{gather*}
for all $s>0$. Thus, for sufficiently large $N$ we have (using again
the upper bound on $L$), for almost all $\alpha\in I$ we have
\begin{equation}
R_{N}^{k}\left(L,\alpha,\Delta\right)\le\left(1+Cm^{-1}\right)\left(L_{m}^{k-1}+O\left(L_{m}^{-s}\right)\right)=L_{m}^{k-1}+O\left(L_{m}^{-s}\right)=L^{k-1}+O\left(L^{-s}\right)\label{eq:k-corr_upper_bound}
\end{equation}
for all $s>0$. Similarly, for almost all $\alpha\in I$ we have
\begin{equation}
R_{N}^{k}\left(L,\alpha,\Delta\right)\ge L^{k-1}-O\left(L^{-s}\right)\label{eq:k-corr_lower_bound}
\end{equation}
for all $s>0$; the bounds (\ref{eq:k-corr_upper_bound}) and (\ref{eq:k-corr_lower_bound})
give (\ref{eq:corr_asymp}).
\end{proof}

\appendix

\section{Normal approximation to the Poisson distribution}

We require a normal approximation to a random variable whose moments
are close to the Poisson moments. Denote 
\[
\mu_{k}^{Poisson}\left(L\right)=\sum\limits _{j=0}^{k}\begin{Bmatrix}k\\
j
\end{Bmatrix}L^{j}
\]
the $k$-th moment of a Poisson-distributed random variable with parameter
$L$, and 
\[
\mu_{k}^{normal}=\begin{cases}
0 & k\,\text{odd}\\
\left(k-1\right)!! & k\,\text{even}
\end{cases}
\]
the $k$-th moment of a standard Gaussian random variable.
\begin{lem}
\label{lem:PoissonToNormal}Let $L=L\left(N\right)\to\infty$ as $N\to\infty$,
and let $\left(X_{N}\right)_{N=1}^{\infty}$ be a sequence of random
variables such that for all $j\ge1$ and for all $s>0$ we have 
\begin{equation}
\mathbb{E}\left[X_{N}^{j}\right]=\mu_{j}^{Poisson}\left(L\right)+O\left(L^{-s}\right)\label{eq:moments_condition}
\end{equation}
as $N\to\infty$. Then
\[
\frac{X_{N}-L}{\sqrt{L}}\stackrel{d}{\longrightarrow}\mathcal{N}\left(0,1\right)
\]
as $N\to\infty$, where $\mathcal{N}\left(0,1\right)$ is the standard
Gaussian distribution.
\end{lem}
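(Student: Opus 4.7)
The plan is to apply the method of moments. Since the standard Gaussian distribution is determined by its moments, by the Fréchet--Shohat theorem it suffices to show that for every fixed integer $k\ge 1$,
\[
\mathbb{E}\left[\left(\frac{X_{N}-L}{\sqrt{L}}\right)^{k}\right]\longrightarrow \mu_{k}^{normal}
\]
as $N\to\infty$. To this end, I would expand via the binomial theorem,
\[
\mathbb{E}\left[(X_{N}-L)^{k}\right]=\sum_{j=0}^{k}\binom{k}{j}(-L)^{k-j}\mathbb{E}\left[X_{N}^{j}\right],
\]
and substitute the hypothesis (\ref{eq:moments_condition}). This splits the right-hand side into a main term
\[
M_{k}(L):=\sum_{j=0}^{k}\binom{k}{j}(-L)^{k-j}\mu_{j}^{Poisson}(L),
\]
which is exactly the $k$-th central moment of a Poisson random variable with mean $L$, plus an error of size $O(L^{k-s})$ coming from the $O(L^{-s})$ terms in (\ref{eq:moments_condition}). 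Dividing through by $L^{k/2}$ and choosing $s>k/2$ (which is permitted because (\ref{eq:moments_condition}) holds for every $s>0$) makes the error $O(L^{k/2-s})=o(1)$ as $L\to\infty$.

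It therefore remains to establish the asymptotic $M_{k}(L)/L^{k/2}\to\mu_{k}^{normal}$. This is the classical Poisson-to-normal limit theorem in the form of moment convergence, which I would verify directly from the Stirling-number identity $\mu_{j}^{Poisson}(L)=\sum_{i=0}^{j}\begin{Bmatrix}j\\ i\end{Bmatrix}L^{i}$ by collecting powers of $L$ in $M_{k}(L)$: the combinatorial identities between Stirling numbers of the second kind and binomial coefficients force all contributions of degree strictly greater than $k/2$ to cancel, leaving the leading coefficient equal to $(k-1)!!$ when $k$ is even and $0$ when $k$ is odd. Alternatively, one may invoke the standard characteristic-function proof of the Poisson CLT together with uniform integrability of the moments of $(Y_{L}-L)/\sqrt{L}$ for $Y_{L}\sim\mathrm{Poisson}(L)$, which holds since all Poisson central moments are known polynomials in $L$ of degree $\lfloor k/2\rfloor$.

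Combining the two ingredients gives $\mathbb{E}[((X_{N}-L)/\sqrt{L})^{k}]\to\mu_{k}^{normal}$ for every $k\ge 1$, and the conclusion follows from Fréchet--Shohat. The argument is essentially routine; the only mildly technical step is the bookkeeping for the Poisson central moments, but as this reduces to a standard identity in the theory of Stirling numbers, I do not foresee any genuine obstacle.
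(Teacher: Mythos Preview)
Your proposal is correct and follows the same overall strategy as the paper: method of moments, binomial expansion of $(X_N-L)^k$, substitution of \eqref{eq:moments_condition}, and reduction to the asymptotics of the Poisson central moments $M_k(L)$. The only difference is in how the limit $M_k(L)/L^{k/2}\to\mu_k^{normal}$ is established: the paper computes the moment-generating function of $(Y_L-L)/\sqrt{L}$, observes that it converges to $e^{t^2/2}$ uniformly on a complex neighbourhood of $0$, and then reads off moment convergence via Cauchy's integral formula; you instead propose either a direct Stirling-number computation or convergence in distribution plus uniform integrability (using that the Poisson central moments are polynomials in $L$ of degree $\lfloor k/2\rfloor$). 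All three routes are standard and equally valid; the paper's analytic argument avoids the combinatorics entirely, while your uniform-integrability argument is perhaps the most elementary, since the degree bound on the central moments is itself easy to prove by induction via the recursion $M_{k+1}(L)=L\bigl(M_k'(L)+kM_{k-1}(L)\bigr)$.
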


\begin{proof}
Let $\widehat{X_{N}}=\frac{X_{N}-L}{\sqrt{L}}.$ It is sufficient
to prove that for all $k\ge1$ we have $\lim\limits _{N\to\infty}\mathbb{E}\left[\widehat{X_{N}}^{k}\right]=\mu_{k}^{normal}$.
By (\ref{eq:moments_condition}), we have
\[
\mathbb{E}\left[\widehat{X_{N}}^{k}\right]=L^{-k/2}\sum_{j=0}^{k}{k \choose j}\left(\mu_{j}^{Poisson}\left(L\right)\right)^{j}\left(-L\right)^{k-j}+o\left(1\right),
\]
so we have to show that for all $k\ge1$ we have
\begin{equation}
L^{-k/2}\sum_{j=0}^{k}{k \choose j}\left(\mu_{j}^{Poisson}\left(L\right)\right)^{j}\left(-L\right)^{k-j}=\mu_{k}^{normal}+o\left(1\right).\label{eq:normal_moments}
\end{equation}

Let $Y_{L}$ be a Poisson-distributed random variable with parameter
$L$ and $\widehat{Y_{L}}=\frac{Y_{L}-L}{\sqrt{L}}$; we have to show
that for all $k\ge1$ we have $\lim\limits _{N\to\infty}\mathbb{E}\left[\widehat{Y_{L}}^{k}\right]=\mu_{k}^{normal}$.
Let $M_{\widehat{Y_{L}}}\left(t\right)$ be the moment-generating
function of $\widehat{Y_{L}}$. Then for any $t$ we have
\begin{align}
M_{\widehat{Y_{L}}}\left(t\right) & =e^{-t\sqrt{L}+L\left(e^{t/\sqrt{L}}-1\right)}=e^{-t\sqrt{L}+L\left(t/\sqrt{L}+t^{2}/\left(2L\right)+O\left(L^{-3/2}\right)\right)}\nonumber \\
 & =e^{\frac{t^{2}}{2}+O(L^{-1/2})}\underset{N\to\infty}{\longrightarrow}e^{t^{2}/2}\label{eq:CharConv}
\end{align}
so that the limit is the moment-generating function of a standard
Gaussian random variable. Since the convergence in (\ref{eq:CharConv})
is uniform in a complex neighbourhood of $t=0$ and all the functions
involved are (complex) analytic, convergence of the moments (which
can be expressed as the derivatives of the moment-generating function
evaluated at zero) easily follows from Cauchy's integral formula.
\end{proof}

\end{document}